\newcommand{\R}{\mathbb{R}}
\newcommand{\E}{\mathbb{E}}
\definecolor{darkred}{RGB}{200,0,0}
\newtheorem{theorem}{Theorem}
\newtheorem*{theorem*}{Theorem}
\newtheorem{lemma}[theorem]{Lemma}
\newtheorem*{defn}{Definition}
\newtheorem{claim}[theorem]{Claim}
\newtheorem*{observation*}{Observation}
\newtheorem*{remark*}{Remark}
\newtheorem*{claim*}{Claim}
\newcommand{\cK}{\mathcal{K}}
\DeclareMathOperator{\coD}{\mathnormal{co}-\mathnormal{D}}
\renewcommand{\S}{\mathbb{S}}
\newcommand{\cH}{\mathcal{H}}
\newcommand{\cP}{\mathcal{P}}
\newcommand{\dist}{\mathsf{dist}}
\newcommand{\eps}{\varepsilon}
\newcommand{\ip}[2]{\langle #1,#2 \rangle}
\DeclareMathOperator{\conv}{\mathsf{conv}}
\newcommand{\N}{\mathbb N}
\numberwithin{equation}{section}
\begin{document}

\title{Approximation depth of convex polytopes}


\author[Bakaev]{Egor Bakaev}
\address{Department of Computer Science, the University of Copenhagen}

\author[Brunck]{Florestan Brunck}
\address{Department of Computer Science, the University of Copenhagen}

\author[Yehudayoff]{Amir Yehudayoff}
\address{Department of Computer Science, the University of Copenhagen,
and Department of Mathematics, Technion-IIT}

\begin{abstract}
We study approximations of polytopes in the
standard model for computing polytopes using Minkowski sums and (convex hulls of) unions. 
Specifically, we study the ability to approximate a target polytope by polytopes of a given depth. Our main results imply that simplices can only be ``trivially approximated''. On the way, we obtain a characterization of simplices as the only ``outer additive'' convex bodies. 
\end{abstract}

\maketitle

\section{Introduction}

There are numerous classical approximation problems in convex geometry, such as approximating a given convex set by polytopes and approximating a given polytope by specific polytopes. 
These problems, in their many forms, have been studied for many years with various types of motivation (see~\cite{boroczky2000approximation, gruber1983approximation,gr2003unbaum,kallay1982indecomposable} and references within).

Shephard and others, for example, considered the following approximation problem (see~\cite{shephard1964approximation} and references within).
The Minkowski sum of two sets $K, L$ in Euclidean space is defined as $K+L = \{x+y : x \in K, y\in L\}$. 
A target convex set $T$ is approximated by a collection $\cK$ of convex sets
if there is a sequence of sets $(K_j)$ such that each $K_j$ is a finite sum of elements of $\cK$ so that $K_j \to T$ as $j \to \infty$
in the Hausdorff metric. 

As we describe next, Shephard proved that an indecomposable~$T$ can only be ``trivially approximated''. 
A set $K \subset \R^n$ is indecomposable if it is ``prime with respect to Minkowski summation'' (see definition below).
A homothet of $K \subset \R^n$
is a set of the form $H=x+\lambda K$ where $x \in \R^n$ and $\lambda  \geq 0$ is called the coefficient of homothety. 
Denote by $\cH_K$ the set of all homothets of $K$. 

\begin{defn}
A set $K \subset \R^n$ is indecomposable if for every $L_1,\ldots,L_m \subset \R^n$ such that $K = L_1+\ldots+L_m$, it holds that $L_1,\ldots,L_m \in \cH_K$.
\end{defn}

Indecomposable polytopes were studied in many works (see e.g.~\cite{firey1964addition,shephard1964approximation,shephard1963decomposable}). 
It is known, e.g, that if $K$ is a polytope so that all of its two-dimensional faces are triangles then $K$ is indecomposable~\cite{shephard1963decomposable}.
Indecomposability was also recently used to study the expressivity of monotone ReLU neural networks and input convex neural networks (ICNNs);
see~\cite{bakaev2025depth,valerdi2024minimal}.
Returning to the approximation problem mentioned above, Shephard proved that an indecomposable $T$ can be approximated by the class $\cK$ only if $\cK$ contains a homothet of~$T$.

\subsection{Constructing polytopes}
We study a non-asymptotic version of the approximation problem, which is motivated by the expressivity of neural networks (see~\cite{bakaev2025depth,haase2023lower,hertrich2021towards} and references within).
We consider the following model for constructing polytopes. The two operations are: Minkowski sum and union. 
For two sets $L,K \subset \R^n$, let
\[K*L = \conv (K \cup L)\]
where $\conv$ denotes convex hull.
Polytopes of depth zero are points:
\[\cP_{n,0} = \{ \{x\} : x \in \R^n\}.\]
Polytopes of depth $d > 0$ are defined inductively as
\[\cP_{n,d} = \left\{ (K_1*L_1) + \ldots + (K_m*L_m) : m \in \N, K_j,L_j \in \cP_{n,d-1} \right\}.\]
The standard name for polytopes in $\cP_{n,1}$ is zonotopes.

We can now define the depth complexity of a polytope $P \subset \R^n$ as the minimum $d$ so that $P \in \cP_{n,d}$.
The depth complexity of a polytope is always finite. In fact, if $P$ has $m$ vertices then
its depth complexity is at most $\lceil \log_2 m \rceil$. This bound is sometimes not sharp; e.g., the depth complexity of cubes is one.

This model has a one-to-one correspondence with the following neural network model for computing convex functions.
The input gates compute linear functions on $\R^n$. Inner gates compute functions of the form $g(x) =  \sum_j c_j \max\{a_j(x),b_j(x)\}$
where for every $j$, it holds that $c_j > 0$ and $a_j,b_j$ were previously computed. 
A network can thus be represented as a directed acyclic graph. 
The number of hidden layers in the network is the maximum number of inner gates in an input-output directed path. 

The correspondence between polytopes and convex functions is seen via support functions. 
Every closed convex set $P \subset \R^n$
defines a support function $h_P : \R^n \to \R$
via
\[h_P(x) = \max \{ \ip{x}{y} : y \in P \}.\]
Support functions have many important properties (see e.g.~\cite{gr2003unbaum}).
In particular, there is a one-to-one correspondence between  $P$ and $h_P$.
This correspondence translates to the computational setting: $P \in \cP_{n,d}$ iff $h_P$ is computed by a neural network with $d$ hidden layers.
There is a long sequence of related works on the ``functional side'' studying monotonicity in the context of neural networks; see~\cite{bakaev2025depth,daniels2010monotone,mikulincer2024size,sill1997monotonic,sivaraman2020counterexample} and references within.

We are interested in the depth complexity of {\em approximating} a given polytope.
We measure the quality of an approximation using two measures of distance (defined below). 
As opposed to Shephard's result, in our non-asymptotic setting, some indecomposable polytopes can be approximated non-trivially.
The reason is that, in dimension $n \geq 3$, the set of indecomposable polytopes is dense~\cite{schneider2013convex}, so every polytope $P \subset \R^n$ is close to an indecomposable polytope $Q \subset \R^n$
(for example, we can build $Q$ that is close to $P$ such that all of its two-dimensional faces are triangles).
If we start with a zonotope $P \in \cP_{n,1}$, then we get an indecomposable $Q$ that can be easily approximated. 

A particularly important polytope in this setting is the simplex.
A standard geometric representation of the $n$-dimensional simplex $\Delta^n$ is in $\R^n$ as the orthogonal projection of $\conv \{e_1,\ldots,e_{n+1}\}$ to the space orthogonal to $(1,1,\ldots,1) \in \R^{n+1}$,
where $e_1,\ldots,e_{n+1} \in \R^{n+1}$ are the standard unit vectors. 
For concreteness, in this work $\Delta^n$ refers to this geometric representation of the simplex.
Most of our results, however, hold for arbitrary simplices (which are affine images of the geometric simplex $\Delta^n$).

The depth complexity of the simplex $\Delta^n$ is known to be $d = \lceil \log_2 (n+1) \rceil$. The upper bound is trivial
and the lower bound was proved by Valerdi~\cite{valerdi2024minimal}.
The depth complexity of the simplex is particularly important because if the depth complexity of $\Delta^{n}$ is $d$ then every piecewise linear function on $\R^{n-1}$ can be computed with $d$ hidden layers~\cite{arora2018understanding,wang2005generalization}.

It is well known that simplices are indecomposable (e.g.,~\cite{shephard1963decomposable}).
Our main finding is the stronger statement that simplices can only be ``trivially approximated''. For example, we show that if for some depth parameter $d$, there is a sequence of polytopes $(P_j)$ in $\cP_{n,d}$ such that $P_j \to \Delta^n$ as $j \to \infty$ then $d \geq \lceil \log_2 (n+1) \rceil$.

\subsection{Distances} We measure the quality
of approximations through the following two notions of distance. 
We start with the in-out distance.
For $H \in \cH_K$, denote by $\lambda(H) = \lambda_K(H)$ the coefficient of homothety of $H$.
Define outer and inner coefficients as follows.
For $L ,K \subset \R^n$,
the outer 
coefficient of $L$ with respect to $K$ is\footnote{It is $\infty$ when the set of $H$'s is empty.}
\[ \lambda_{K,o}(L) = \inf \{ \lambda_K(H): H \in \cH_K, H \supseteq L\}.\]
The inner coefficient of $L$ with respect to $K$ is
\[\lambda_{K,i}(L) = \sup \{ \lambda_K(H): H \in \cH_K, H \subseteq L\}.\]
A set $L \subset \R^n$ is called a convex body if it is convex, compact and has non-empty relative interior.\footnote{In some works, the term ``body'' means non-empty interior.}
If $L \subseteq \R^n$ is a convex body then $0 < \lambda_{\Delta,o}(L) < \infty$.

\begin{defn}
The in-out distance is defined by\footnote{When the expression is $\infty-\infty$, it is not defined.}
\[D_{io}(L;K)
= \lambda_{K,o}(L) - \lambda_{K,i}(L).\]
\end{defn}
It is somewhat similar to the Banach--Mazur distance, but it is not invariant under scaling or invertible linear maps. It is, in fact, positively homogeneous: for all $\lambda > 0$,
\[D_{io}(\lambda L;  K)
= \lambda D_{io}(L;K).\]
Consequently, the in-out distance depends on the scale of $L$. The smaller $L$ is, the smaller $D_{io}(L;K)$ is.

The second notion of distance, the empty corners distance, is defined only for the simplex. Consider the simplex $\Delta = \Delta^{n}$.
For $H \in \cH_\Delta$, the vertices of $\Delta$ are in one-to-one correspondence to the vertices of~$H$, so we can identify them and denote them by
$V(\Delta) = V(H)$.
For a set $L \subset \R^n$,
for $H \in \cH_\Delta$ so that $L \subseteq H$ and for $v \in V(\Delta)$, the size of the empty corner of $L$ with respect to $v$ is 
\[E_v(L;H)= \sup \big\{ \lambda_\Delta(F): F \in \cH_\Delta, v \in V(F),
 F \subseteq H, F \cap L = \emptyset \big\};\]
it is zero when the set of $F$'s is empty.
The total empty corner size is 
$$E(L;H) = \sum_{v \in V(\Delta)} E_v(L;H).$$
For a bounded $L \subset \R^n$, let $\Delta_o(L)$ denote the unique $H \in \cH_{\Delta^n}$
so that $H \supseteq L$ and $\lambda_{\Delta}(H) = \lambda_{\Delta,o}(L)$.
In other words, it is the smallest homothet of $\Delta$ containing $L$. 
Uniqueness holds
because if $H,H' \in \cH_\Delta$ are such that $H \cap H' \neq \emptyset$ then $H \cap H' \in \cH_\Delta$. 

\begin{defn}
The empty corner distance of 
a convex body $L \subset \R^n$ is
\[D_e(L) = D_e(L;\Delta^n) = \frac{E(L;\Delta_o(L))}{\lambda_{\Delta,o}(L)}.\]
\end{defn}
The empty corner distance is invariant to scale: for all $\lambda >0$,
\[D_e(L) = D_e(\lambda L).\]
It trivially holds that\footnote{In fact, the bound $D_e(L) \leq n - \frac{1}{n}$ is always true with equality iff $L = -\Delta$. Indeed, assume that $\lambda_{\Delta,o}(L) =1$. If $U \subset V(\Delta)$ is of size $|U|=n$, then $\sum_{v \in U} E_v(L;\Delta) \leq n-1$ because there is at least one point of $L$ on the corresponding facet (see \Cref{lem:um-ratios}). So $D_e(L) \leq (n+1)(n-1)/n$. This calculation is sharp only for $L = - \Delta$ because the matrix that has zeros on the diagonal and ones off the diagonal has full rank
(so in the equality case, all the $E_v(L;\Delta)$'s are the same).
} $0 \leq D_e(L) \leq n+1$.
It is sometimes convenient to work with the co-distance
\[\coD_e(L) = \coD_e(L;\Delta^n) := n+1 - D_e(L).\]

The following simple observation partially justifies the term ``distance''. 

\begin{observation*}
For every convex body $L \subset \R^n$,
\[L \in \cH_\Delta \ \iff \ 
D_{io}(L;\Delta^n) = 0 \ \iff \ 
D_e(L;\Delta^n) = 0.\]
\end{observation*}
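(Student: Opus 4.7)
The plan is as follows. The implications $L \in \cH_\Delta \Rightarrow D_{io}(L;\Delta^n) = 0$ and $L \in \cH_\Delta \Rightarrow D_e(L;\Delta^n) = 0$ are essentially by inspection: if $L$ is itself a homothet of $\Delta$, then $L$ is simultaneously the smallest circumscribed and the largest inscribed homothet, so the outer and inner coefficients coincide; moreover $\Delta_o(L) = L$ already contains each of its vertices, so no admissible empty-corner homothet $F$ exists at any vertex and $E_v = 0$ for all $v \in V(\Delta_o(L))$.

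For $D_{io}(L;\Delta^n) = 0 \Rightarrow L \in \cH_\Delta$, I first observe that, since $L$ is a compact convex body and $0 \in \Delta^n$, both the infimum in $\lambda_{\Delta,o}(L)$ and the supremum in $\lambda_{\Delta,i}(L)$ are attained: in each case the set of admissible pairs $(x,\lambda)$ is closed, and boundedness of $L$ forces a bounded range for $\lambda$ and then, via the anchor, a bounded range for $x$, so a compactness argument produces optimal $H_i \subseteq L \subseteq H_o$ with $\lambda_\Delta(H_i) = \lambda_\Delta(H_o)$. Any two homothets of $\Delta$ with equal coefficient of homothety differ only by a translation, and a compact set translated inside itself must be fixed by that translation; hence $H_i = H_o$, and $L = H_i \in \cH_\Delta$.

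The main step is $D_e(L;\Delta^n) = 0 \Rightarrow L \in \cH_\Delta$. The key claim I will establish is that for every $v \in V(\Delta_o(L))$,
\[
E_v(L;\Delta_o(L)) = 0 \iff v \in L.
\]
If $v \in L$, then every $F \in \cH_\Delta$ with $v \in V(F)$ satisfies $v \in F \cap L$, so no $F$ is admissible in the definition of $E_v$ and the supremum is vacuously zero. Conversely, if $v \notin L$ then, by compactness of $L$, some open ball around $v$ is disjoint from $L$; one can then inscribe in this ball a sufficiently small homothet $F$ of $\Delta$ with $v \in V(F)$ (by picking $F$ anchored at $v$ with tiny coefficient, so that $F \subseteq \Delta_o(L)$ follows automatically from $v \in V(\Delta_o(L))$ and convexity of $\Delta_o(L)$), certifying $E_v > 0$. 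Assuming $D_e(L) = 0$ then forces every vertex of $\Delta_o(L)$ to lie in $L$, and since $L$ is convex and $\Delta_o(L) = \conv V(\Delta_o(L))$, this gives $\Delta_o(L) \subseteq L \subseteq \Delta_o(L)$, so $L = \Delta_o(L) \in \cH_\Delta$.

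The only points that require a bit of care are the compactness arguments used to attain the outer and inner extrema and the explicit placement of a small homothet of $\Delta$ anchored at an uncovered vertex; neither step is substantial, so I do not anticipate a genuine obstacle.
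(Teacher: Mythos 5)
Your proof is correct. The paper omits a proof of this observation (treating it as a routine consequence of the definitions), and your argument is the natural one: the two forward implications are immediate from the definitions, the $D_{io}=0 \Rightarrow L\in\cH_\Delta$ direction follows from attainment of the inner/outer extrema plus the fact that a bounded set cannot be strictly translated into itself, and the $D_e = 0 \Rightarrow L \in \cH_\Delta$ direction follows from your correct identification $E_v(L;\Delta_o(L)) = 0 \iff v \in L$ together with convexity. The only small redundancy is re-deriving attainment for the outer extremum, which the paper already grants by defining $\Delta_o(L)$; everything else is tight.
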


\subsection{Results}
We start with a couple of theorems showing that simplices are stably indecomposable.  
The theorems show that the Minkowski sum operation cannot decrease the distance to a simplex. The theorems are stronger than Shephard's inapproximability result mentioned above for the special case of simplices. 

\begin{theorem}
\label{thm:Dio}
If $L = L_1+\ldots+L_m$ where $L_1,\ldots,L_m \subset \R^n$ are convex bodies then
$$D_{io}(L;\Delta^{n}) \geq \max \{ D_{io}(L_j;\Delta^{n}) : j \in [m]\}.$$
\end{theorem}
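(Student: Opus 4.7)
The plan is to exploit two special features of the simplex under Minkowski addition. By induction on $m$, it suffices to prove the statement for $m=2$: if $L = L_1 + L_2$, then $D_{io}(L;\Delta^n) \ge D_{io}(L_j;\Delta^n)$ for $j\in\{1,2\}$.

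The first ingredient is additivity of the outer coefficient: for convex bodies $A,B \subseteq \R^n$,
\[ \lambda_{\Delta,o}(A+B) \;=\; \lambda_{\Delta,o}(A) + \lambda_{\Delta,o}(B). \]
To show this, observe that $A \subseteq x + \lambda\Delta$ is equivalent to the $n+1$ support-function inequalities $h_A(u_i) \le \ip{x}{u_i} + \lambda h_\Delta(u_i)$, where $u_1,\dots,u_{n+1}$ are the outer facet normals of $\Delta$. Because $\Delta$ is a simplex, the $u_i$ satisfy an essentially unique linear dependence $\sum_i a_i u_i = 0$ with positive coefficients $a_i$ (the facet areas). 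Taking the $a_i$-weighted sum of the constraints eliminates $x$ and forces
\[ \lambda \;\ge\; \frac{\sum_i a_i\, h_A(u_i)}{\sum_i a_i\, h_\Delta(u_i)}, \]
and conversely this choice of $\lambda$ makes the system of equalities $\ip{x}{u_i} = h_A(u_i) - \lambda h_\Delta(u_i)$ consistent. Hence $\lambda_{\Delta,o}(A)$ is a linear functional of $h_A$, and additivity follows from $h_{A+B} = h_A + h_B$.

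The second ingredient concerns Minkowski addition with a homothet of the simplex: for any convex body $A$ and any $\mu \ge 0$,
\[ \lambda_{\Delta,i}(A+\mu\Delta) \;=\; \mu + \lambda_{\Delta,i}(A). \]
The $\ge$ direction is immediate: any $F = z + c\Delta \subseteq A$ yields $F+\mu\Delta = z+(c+\mu)\Delta \subseteq A+\mu\Delta$. For the $\le$ direction, given $z+\nu\Delta \subseteq A+\mu\Delta$ with $\nu \ge \mu$, I would write $z+\nu\Delta = \bigl(z+(\nu-\mu)\Delta\bigr) + \mu\Delta$ and apply R\aa{}dstr\"om's cancellation lemma with summand $\mu\Delta$ to conclude $z+(\nu-\mu)\Delta \subseteq A$, hence $\nu-\mu \le \lambda_{\Delta,i}(A)$.

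To finish, write $\Delta_o(L_2) = y + \lambda_{\Delta,o}(L_2)\Delta$. The inclusion $L_1 + L_2 \subseteq (L_1+y) + \lambda_{\Delta,o}(L_2)\Delta$, combined with monotonicity of $\lambda_{\Delta,i}$ and the second ingredient, yields
\[ \lambda_{\Delta,i}(L_1+L_2) \;\le\; \lambda_{\Delta,o}(L_2) + \lambda_{\Delta,i}(L_1), \]
and therefore, using the first ingredient,
\[ D_{io}(L;\Delta^n) \;=\; \lambda_{\Delta,o}(L_1)+\lambda_{\Delta,o}(L_2) - \lambda_{\Delta,i}(L_1+L_2) \;\ge\; D_{io}(L_1;\Delta^n); \]
the bound with $L_2$ in place of $L_1$ follows by symmetry. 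The main obstacle is establishing the two ingredients: additivity of $\lambda_{\Delta,o}$ is special to simplices (it uses the essentially unique positive linear dependence of the facet normals), and the formula for $\lambda_{\Delta,i}(A+\mu\Delta)$ rests on R\aa{}dstr\"om's cancellation lemma for convex bodies.
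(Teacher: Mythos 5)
Your proof is correct, and it takes a genuinely different route from the paper's in one key step. Both arguments reduce to $m=2$, establish the outer additivity $\lambda_{\Delta,o}(L_1+L_2)=\lambda_{\Delta,o}(L_1)+\lambda_{\Delta,o}(L_2)$ (the paper cites additivity of support functions; you re-derive the same fact via the linear dependence of the facet normals --- equivalent content), and then prove the inner-coefficient bound $\lambda_{\Delta,i}(L_1+L_2) \le \lambda_{\Delta,i}(L_1)+\lambda_{\Delta,o}(L_2)$. It is at this last step that the two proofs diverge. The paper translates $L_1$ so that $\Delta_o(L_2)\subseteq \frac{\lambda_{\Delta,o}(L_2)}{\lambda_{\Delta,i}(L_1)}L_1$, giving $L_1+L_2\subseteq\bigl(1+\frac{\lambda_{\Delta,o}(L_2)}{\lambda_{\Delta,i}(L_1)}\bigr)L_1$ by the convexity identity $L_1+cL_1=(1+c)L_1$, and concludes by monotonicity and positive homogeneity of $\lambda_{\Delta,i}$ --- no cancellation needed. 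You instead bound $L_1+L_2\subseteq L_1+\Delta_o(L_2)$ and invoke the exact identity $\lambda_{\Delta,i}(A+\mu\Delta)=\mu+\lambda_{\Delta,i}(A)$, whose upper bound you obtain from R\aa{}dstr\"om's cancellation lemma. Your route is more modular and the exact formula may have independent interest, but it imports a heavier tool; the paper's route is more elementary. One small dividend of your version: if $L_1$ is a lower-dimensional convex body (so $\lambda_{\Delta,i}(L_1)=0$), the paper's scale factor is undefined and that case needs a separate (though easy) treatment, whereas your argument goes through unchanged.
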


\begin{theorem}
\label{thm:De}
If $L = L_1+\ldots+L_m$ where $L_1,\ldots,L_m \subset \R^n$ are convex bodies then
\[\coD_e(L) \leq  \max \{ \coD_{e}(L_j) : j \in [m]\}.\]
\end{theorem}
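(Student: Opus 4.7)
The plan is to derive a clean formula expressing $\coD_e(K)$ in terms of the barycentric affine functions of $\Delta_o(K)$, and then exploit their linearity to show that $\coD_e(L)$ is in fact a convex combination of $\coD_e(L_1),\ldots,\coD_e(L_m)$. The theorem will then follow from the trivial bound ``convex combination $\leq$ max''.

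\textbf{Step 1: a barycentric formula for $\coD_e$.} For a convex body $K \subset \R^n$ with $\Delta_o(K) = a + \lambda \Delta$, let $\beta_0,\ldots,\beta_n$ denote the barycentric affine functions of $\Delta_o(K)$, satisfying $\beta_i(v'_j) = \delta_{ij}$ at the vertices $v'_j$ of $\Delta_o(K)$ and $\sum_i \beta_i \equiv 1$. I would establish
\[\coD_e(K) = \sum_{i=0}^n \max_{x \in K} \beta_i(x).\]
The idea is that a corner homothet at vertex $v'_i$ with coefficient $\mu \in [0,\lambda]$ is exactly the slice $\{x \in \Delta_o(K): \beta_i(x) \geq 1 - \mu/\lambda\}$, so its disjointness from $K$ is controlled entirely by $\max_{x \in K}\beta_i(x)$. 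One obtains $E_{v'_i}(K;\Delta_o(K)) = \lambda\bigl(1 - \max_{x \in K}\beta_i(x)\bigr)$; summing over $i$ and using $\coD_e = (n+1) - D_e$ yields the formula.

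\textbf{Step 2: additivity of $\lambda_{\Delta,o}$ under Minkowski sum.} Since $\coD_e$ is translation invariant, I may translate each $L_j$ so that $\Delta_o(L_j) = \lambda_j \Delta$, where $\lambda_j := \lambda_{\Delta,o}(L_j)$. Then $L \subseteq \sum_j \lambda_j \Delta = \lambda \Delta$ with $\lambda := \sum_j \lambda_j$. To verify $\Delta_o(L) = \lambda \Delta$ exactly, I would show that $L$ touches every facet of $\lambda \Delta$: by minimality of $\Delta_o(L_j)$, there is $x_j \in L_j$ lying on the facet of $\lambda_j \Delta$ opposite to vertex $i$, and then $\sum_j x_j \in L$ lies on the corresponding facet of $\lambda \Delta$. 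Writing the barycentrics of $\Delta$ as $\alpha_i = \ell_i + c_i$ (linear plus constant), the identity $\sum_i \ell_i \equiv 0$ (coming from $\sum_i \alpha_i \equiv 1$) then rules out any smaller containing homothet, giving $\lambda_{\Delta,o}(L) = \sum_j \lambda_{\Delta,o}(L_j)$.

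\textbf{Step 3: the convex combination.} With the Step 2 normalization, the barycentrics of $\lambda \Delta$ and $\lambda_j \Delta$ take the explicit forms $\beta_i(x) = \ell_i(x)/\lambda + c_i$ and $\beta_i^{(j)}(x) = \ell_i(x)/\lambda_j + c_i$. Since $\ell_i$ is linear, $\max_{x \in L}\ell_i(x) = \sum_j \max_{x_j \in L_j}\ell_i(x_j)$. Substituting into the Step~1 formula and using $\sum_i c_i = 1$, a short algebraic manipulation yields
\[\coD_e(L) = \sum_{j=1}^m \frac{\lambda_j}{\lambda}\,\coD_e(L_j),\]
a convex combination of the $\coD_e(L_j)$'s with weights $\lambda_j/\lambda$. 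The bound $\coD_e(L) \leq \max_j \coD_e(L_j)$ is then immediate. I expect the main technical point to be Step 1 — the identification of the empty corner with a single barycentric maximum — after which Steps 2 and 3 follow from standard Minkowski-sum linearity and the identities $\sum_i \ell_i \equiv 0$, $\sum_i c_i = 1$.
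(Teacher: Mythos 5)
Your proof is correct and takes essentially the same route as the paper: the paper's Claim~5.3 establishes $E_v(L;\Delta_o(L)) = \sum_j E_v(L_j;\Delta_o(L_j))$ vertex by vertex via support-function additivity, and then normalizes by $\lambda_{\Delta,o}(L) = \sum_j \lambda_{\Delta,o}(L_j)$ and applies the mediant inequality, which is precisely your convex-combination identity $\coD_e(L) = \sum_j \tfrac{\lambda_j}{\lambda}\coD_e(L_j)$ in disguise. Your barycentric reformulation in Step~1 and the explicit weighted-average conclusion in Step~3 are a notational repackaging of the same content, not a genuinely different argument.
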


The following theorem describes the effect of the union operation on the empty corner distance.

\begin{theorem}
\label{thm:DeUnion}
If $L = L_1*L_2$ where $L_1,L_2 \subset \R^n$ are convex bodies then
\[\coD_e(L) \leq
\coD_e(L_1) + \coD_e(L_2).\]
\end{theorem}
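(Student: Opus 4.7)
The plan is first to translate $\coD_e$ into a compact ratio of support-function data in the facet-normal directions of $\Delta^n$, and then to deduce the inequality for the union operation $*$ by a short pointwise calculation on top of that identity.

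Let $u_0,\ldots,u_n\in\R^n$ be the outer unit normals of the facets of $\Delta = \Delta^n$; by regularity of $\Delta^n$, these can be chosen so that $\sum_i u_i = 0$, and $c := h_{\Delta}(u_i)$ is the same positive number for every $i$. For a convex body $K\subset\R^n$, define
\[
P(K) = \sum_i h_K(u_i), \qquad P'(K) = \sum_i h_K(-u_i).
\]
Both are positively homogeneous and, by $\sum u_i = 0$, translation invariant. The first step is to establish the identity
\[
\coD_e(K) \;=\; 1 + \frac{P'(K)}{P(K)}.
\]
To derive it, observe that $K$ touches every facet of its minimal outer homothet $\Delta_o(K) = x_0 + \lambda\Delta$; thus $h_K(u_i) = u_i^\top x_0 + \lambda c$ for each $i$, and summing gives $\lambda_{\Delta,o}(K) = P(K)/((n+1)c)$. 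A short direct computation of the maximal $F \in \cH_\Delta$ sharing the vertex $v_{(i)}$ opposite the facet with normal $u_i$ with $\Delta_o(K)$ and satisfying $F \cap K = \emptyset$ then shows
\[
E_{v_{(i)}}(K;\Delta_o(K)) = \lambda_{\Delta,o}(K) - \frac{h_K(u_i)+h_K(-u_i)}{(n+1)c},
\]
so summing over $i$ and dividing by $\lambda_{\Delta,o}(K)$ yields the displayed identity.

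Now let $L = L_1 * L_2$. Since $h_L = \max(h_{L_1},h_{L_2})$ pointwise, we have $P(L) \geq P(L_k)$ for $k=1,2$. The heart of the argument is the elementary pointwise bound, for every $i$,
\[
\max\bigl(h_{L_1}(-u_i),h_{L_2}(-u_i)\bigr) \leq h_{L_1}(-u_i) + h_{L_2}(-u_i) + \max\bigl(h_{L_1}(u_i),h_{L_2}(u_i)\bigr).
\]
Via $\max(x,y) + \min(x,y) = x+y$, this is equivalent to $\min(h_{L_1}(-u_i),h_{L_2}(-u_i)) \geq -\max(h_{L_1}(u_i),h_{L_2}(u_i))$, and this follows from the width non-negativity $h_{L_k}(u_i) + h_{L_k}(-u_i)\geq 0$ of each convex body $L_k$.

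Summing the pointwise bound over $i$ gives $P'(L) \leq P'(L_1) + P'(L_2) + P(L)$. Dividing by $P(L)$ and using $P'(L_k)\geq 0$ together with $P(L)\geq P(L_k)$, one obtains
\[
\coD_e(L) = 1 + \frac{P'(L)}{P(L)} \leq 2 + \frac{P'(L_1)}{P(L)} + \frac{P'(L_2)}{P(L)} \leq \coD_e(L_1) + \coD_e(L_2),
\]
as claimed. The main obstacle is the first reformulation $\coD_e(K) = 1 + P'(K)/P(K)$; once that identity is in hand, the remainder of the argument is essentially two lines.
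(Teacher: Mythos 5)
Your proof is correct, and it takes a genuinely different route from the paper's. The paper argues directly with the empty-corner quantities: it uses $E_v(L;H)=\min\{E_v(L_1;H),E_v(L_2;H)\}\geq E_v(L_1;H)+E_v(L_2;H)-\lambda_\Delta(H)$ and then invokes \Cref{lem:DeInf} (proved via barycentric coordinates) to pass from $E(L_k;H)/\lambda_\Delta(H)$ with the non-optimal outer homothet $H=\Delta_o(L)$ down to $D_e(L_k)$. You instead establish the closed-form identity $\coD_e(K)=1+P'(K)/P(K)$ with $P(K)=\sum_i h_K(u_i)$, $P'(K)=\sum_i h_K(-u_i)$, which encodes the empty-corner distance purely in support-function values along the $\pm$ facet normals; this identity removes any need for \Cref{lem:DeInf}, since the theorem then follows from the elementary pointwise bound $\min(a_1',a_2')\geq -\max(a_1,a_2)$ (nonnegativity of widths) together with $P(L)\geq P(L_k)$. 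A pleasant bonus of the identity is that it also yields one-line re-proofs of \Cref{thm:De} (Minkowski additivity of $P,P'$ makes $\coD_e(L)$ a mediant of the $\coD_e(L_j)$'s) and \Cref{lem:DeSym} (central symmetry gives $P'=P$, hence $\coD_e=2$). Two small points are worth spelling out in a final write-up: the ``short direct computation'' of $E_{v_{(i)}}(K;\Delta_o(K))=\lambda_{\Delta,o}(K)-\bigl(h_K(u_i)+h_K(-u_i)\bigr)/\bigl((n+1)c\bigr)$ deserves a few lines (it uses that the minimality of $\Delta_o(K)$ forces $h_K(u_j)=h_{\Delta_o(K)}(u_j)$ for every $j$, plus $h_{\Delta}(-u_i)=nc$ for the centered regular simplex); and $P'(L_k)\geq 0$, used in the final chain of inequalities, needs a one-line justification, e.g.\ $P'(L_k)=\sum_i h_{L_k}(-u_i)\geq\sum_i\ip{-u_i}{x_0}=0$ for any fixed $x_0\in L_k$.
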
 

Next, we calculate the distance of the simplex from centrally symmetric sets.
A set $L \subseteq \R^n$ is centrally symmetric if there is $p \in \R^n$ so that for all $x \in \R^n$ we have $p+x \in L$ iff $p-x \in L$.
The point $p$ is called the center of symmetry of $L$. 
It is worth mentioning that some works measured how far is a given convex body from being  centrally symmetric using inscribed simplices (see e.g.~\cite{toth2009asymmetry} and references within).
Surprisingly, the distance is the same for all centrally symmetric sets. 

\begin{lemma}
\label{lem:DeSym}
	If $L \subset \R^n$ is a centrally symmetric convex body then $D_e(L) = n-1$.
\end{lemma}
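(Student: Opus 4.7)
My plan is to compute $D_e(L)$ directly in barycentric coordinates. Using scale and translation invariance of $D_e$, I may assume $\Delta_o(L)=\Delta$. Let $\alpha_1,\ldots,\alpha_{n+1}$ be the barycentric coordinate functions of $\Delta$; these are affine on $\R^n$, nonnegative on $\Delta$, and satisfy $\sum_i\alpha_i\equiv 1$. Set $m_i=\max_{x\in L}\alpha_i(x)$ and $\ell_i=\min_{x\in L}\alpha_i(x)$.

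First I would identify empty corners in these coordinates. A homothet $F\in\cH_\Delta$ contained in $\Delta$ with $v_i\in V(F)$ must have $v_i$ as its own $i$-th vertex (any other identification of $v_i$ with a vertex of $F$ pushes $F$ out of $\Delta$), so
\[F=(1-\mu)v_i+\mu\Delta=\{x\in\Delta:\alpha_i(x)\ge 1-\mu\}\]
with $\mu=\lambda_\Delta(F)\in[0,1]$. Thus $F\cap L=\emptyset$ iff $\mu<1-m_i$, giving $E_{v_i}(L;\Delta)=1-m_i$ and
\[D_e(L)=(n+1)-\sum_{i=1}^{n+1}m_i.\]

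Next I would argue $\ell_i=0$ for every $i$, using outer-minimality of $\Delta_o(L)=\Delta$. The set $\Delta'=\{x\in\R^n:\alpha_i(x)\ge\ell_i\ \text{for all}\ i\}$ contains $L$ and shares its facet normals with $\Delta$; since any polytope whose facet normals are those of a simplex is a simplex homothet, $\Delta'\in\cH_\Delta$ and $\Delta'\subseteq\Delta$. If some $\ell_j>0$, the facet of $\Delta'$ corresponding to $\alpha_j$ is pushed strictly inward, so $\Delta'\subsetneq\Delta$ and $\lambda_\Delta(\Delta')<1$, contradicting $\lambda_{\Delta,o}(L)=1$.

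Finally, central symmetry at the center $p$ yields $m_i+\ell_i=2\alpha_i(p)$, since each $\alpha_i$ is affine and $x\mapsto 2p-x$ is a bijection of $L$. Summing over $i$ and using $\sum_i\alpha_i(p)=1$ together with $\ell_i=0$ gives $\sum_i m_i=2$, hence $D_e(L)=(n+1)-2=n-1$. The main obstacle I expect is the auxiliary fact that parallel inward shifts of the facets of a simplex again produce a simplex homothet; this is classical (a simplex in $\R^n$ is characterized by its $n+1$ facet normals admitting a unique positive linear dependence, which forces any polytope with those normals to be a homothet) but warrants an explicit verification in the write-up.
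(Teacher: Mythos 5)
Your proposal is correct, and it is essentially the paper's argument, transcribed into pure barycentric-coordinate language. The paper's Lemma 10 (the identity $\sum_j \dist(p,F_j)/\dist(v_j,F_j)=1$) is exactly the barycentric identity $\sum_i\alpha_i(p)=1$ you invoke; its remark right after Lemma 10 is exactly your formula $E_{v_i}(L;\Delta)/\lambda_{\Delta,o}(L)=1-m_i$; and the paper's computation with the reflected hyperplanes $F'_j$ encodes the same pairing you phrase as $m_i+\ell_i=2\alpha_i(p)$, with the tangency of the $F_j$'s to $L$ playing the role of your $\ell_i=0$. If anything, your write-up isolates the three ingredients (outer-minimality gives $\ell_i=0$; central symmetry gives $m_i+\ell_i=2\alpha_i(p)$; the barycentric sum gives $\sum\alpha_i(p)=1$) a bit more cleanly than the paper's distance-ratio calculation does, and the auxiliary fact you flag at the end (a bounded polytope with the facet normals of a simplex is a simplex homothet) is a genuine but standard verification that the paper also leaves implicit.
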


We can now draw the following conclusion that says that simplices can only be trivially approximated. The next theorem fully describes the best approximation of the simplex by depth-$d$ polytopes.  
For a depth parameter $d>0$,
let\footnote{$P \not \in \cP_{n,0}$ because points are not convex bodies. As the theorem shows, the minimum is attained.}
\[D_e(\cP_{n,d}) := \min \{D_e(P;\Delta^n):P \in \cP_{n,d} \setminus \cP_{n,0} \} .\]

\begin{theorem}
\label{thm:Simplex} For $0 < d < \lceil \log_2(n+1) \rceil$,
\[ D_e(\cP_{n,d}) = n+1- 2^d.\]
\end{theorem}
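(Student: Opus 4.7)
The theorem asks for matching upper and lower bounds on $D_e(\cP_{n,d})$. I will establish the upper bound by an explicit construction and the lower bound by induction on $d$ using the preceding structural theorems.

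\emph{Upper bound.} The plan is to pick any $W \subseteq V(\Delta^n)$ of size $2^d$; this is possible since $2^d \leq n$ when $d < \lceil \log_2(n+1) \rceil$. Set $P = \conv W$ and pair up its vertices in a balanced binary tree of $*$-operations (pairs of points at depth $1$, pairs of segments at depth $2$, and so on); this writes $P$ as an element of $\cP_{n,d}$. To evaluate $D_e(P)$, first check $\Delta_o(P) = \Delta^n$: since $|W| \geq 2$, every facet of $\Delta^n$ contains a vertex of $W$, forcing $h_P$ to agree with $h_{\Delta^n}$ on all facet normals and hence $\Delta_o(P) = \Delta^n$. Then $E_v(P;\Delta^n) = 0$ for $v \in W$ (as $v \in P$), and $E_v(P;\Delta^n) = 1$ for $v \notin W$ (as $P$ lies in the facet of $\Delta^n$ opposite $v$, so any homothet at $v$ with coefficient $<1$ misses $P$). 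Summing yields $D_e(P) = n+1-|W| = n+1-2^d$.

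\emph{Lower bound.} By induction on $d \geq 1$, I will show that every $P \in \cP_{n,d} \setminus \cP_{n,0}$ satisfies $\coD_e(P) \leq 2^d$. The base case $d=1$ forces $P$ to be a nontrivial centrally symmetric zonotope, so $\coD_e(P) = 2$ by \Cref{lem:DeSym}. For the inductive step, writing $P = \sum_j (K_j * L_j)$ with $K_j, L_j \in \cP_{n,d-1}$ and combining \Cref{thm:De} and \Cref{thm:DeUnion} gives
\[\coD_e(P) \leq \max_j \coD_e(K_j * L_j) \leq \max_j \bigl(\coD_e(K_j) + \coD_e(L_j)\bigr) \leq 2 \cdot 2^{d-1} = 2^d,\]
where point summands $K_j, L_j$ are absorbed into translations and contribute nothing to $\coD_e$.

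\emph{Main obstacle.} Theorems \ref{thm:De}, \ref{thm:DeUnion} and \Cref{lem:DeSym} are stated for convex bodies, whereas the intermediate polytopes $K_j$, $L_j$, and $K_j * L_j$ in the inductive step need not be full-dimensional in $\R^n$ (for instance, a single segment in $\R^n$ with $n \geq 2$). To handle this I plan to thicken by a small convex body $B$: the identities $(K+\eps B)*(L+\eps B) = (K*L) + \eps B$ and $\sum_j (M_j + \eps B) = \bigl(\sum_j M_j\bigr) + m\eps B$ make the perturbation propagate cleanly through the construction tree and produce a body at every node, so the body versions of the theorems apply to the thickened construction. Letting $\eps \to 0$ and appealing to Hausdorff-continuity of $\coD_e$ on nonpoint compact convex sets transfers each inequality back to the original sets, and also yields the required extension of \Cref{lem:DeSym} to lower-dimensional centrally symmetric sets. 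Making this continuity step precise is the main technical burden of the argument.
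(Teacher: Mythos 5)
Your proof follows the same overall structure as the paper's: upper bound via an explicit depth-$d$ construction touching every facet of $\Delta^n$ (your $\conv W$ with $|W|=2^d$ is in fact a cleaner phrasing than the paper's), and lower bound by induction on $d$ combining \Cref{thm:De}, \Cref{thm:DeUnion}, and \Cref{lem:DeSym}. The difference lies in how the degenerate case is handled. First, note that the paper's notion of ``convex body'' already includes lower-dimensional sets such as segments (only single points are excluded), so \Cref{thm:De}, \Cref{thm:DeUnion} and \Cref{lem:DeSym} apply to them directly; the only genuine obstruction is a summand $K_j$ or $L_j$ that is a single point, where $D_e$ is undefined. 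The paper disposes of this in one line: if $K_j$ is a point, replace it by the segment $K_j * \{x\}$ for some $x \in L_j \setminus K_j$, which leaves $K_j * L_j$ unchanged, lies in $\cP_{n,d-1}$, and is a genuine convex body. Your thickening-plus-continuity route can be made to work, but as written it has a gap: the claim that the perturbation ``propagates cleanly through the construction tree'' is not quite right, because the Minkowski-sum identity $\sum_j(M_j+\eps B) = (\sum_j M_j)+m\eps B$ multiplies the thickness by $m$, so after a $+$-node the two children of the next $*$-node generally carry \emph{different} thicknesses $a\eps B$ and $a'\eps B$, and $(K+a\eps B)*(L+a'\eps B)$ is no longer $(K*L)+c\eps B$ for any $c$. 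One can salvage the idea by thickening only locally at each application of \Cref{thm:DeUnion} (replace $K_j, L_j$ by $K_j+\eps B, L_j+\eps B$ with $B$ a low-depth body, note $K_j+\eps B \in \cP_{n,d-1}$ by closure under Minkowski sum, apply the inductive hypothesis, and let $\eps\to 0$ at the body $K_j*L_j$), but this local reformulation is not what you wrote, and it still carries the burden of proving Hausdorff-continuity of $D_e$ at non-point bodies, a step you flag but do not carry out. The paper's replace-by-segment trick avoids both issues entirely.
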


\Cref{thm:Simplex} is stronger than the $\lceil \log_2(n+1) \rceil$ lower bound on the depth complexity of $\Delta^n$ proved in~\cite{valerdi2024minimal}.
It says that we cannot even approximate $\Delta^n$
in depth smaller than $\lceil \log_2 ( n+1) \rceil$.
For example,
if the polytope $P$ has depth complexity $d < \lceil \log_2 (n+1) \rceil$ then $D_e(P) \geq 1$.

The theorem also allows to move from inapproximability on the ``polytope side''
to inapproximability on the ``function side''.
The Blaschke selection theorem 
says that the collection of convex bodies is locally compact. It implies that for every $n$ and $\delta > 0$,
there is $\eps>0$ such that if $D_e(P) \geq \delta$ then
 \[\E |h_P(X) - h_{\Delta^n}(X)| \geq \eps,\]
where $X \sim \S^{n-1}$ is sampled from the uniform (Haar) probability measure on the sphere.
In particular, there is $\eps_0 = \eps_0(n) > 0$ such that if the polytope $P$ has depth complexity $d < \lceil \log_2 (n+1) \rceil$ then 
 \[\E |h_P(X) - h_{\Delta^n}(X)| \geq \eps_0.\]
It is worth noting that $\eps_0 \to 0$ as $n \to \infty$. 
Indeed, $\E |h_{\Delta^n}(X)| \leq O(\tfrac{\log n}{\sqrt{n}})$ 
so for $P = \{0\} \in \cP_{n,0}$,
\[\E |h_P(X) - h_{\Delta^n}(X)| \to 0 \]
as $n \to \infty$. In words, the simplex is approximated by a point.

The previous three theorems concern the empty corner distance. To complete the picture, the following lemma shows that (with the proper normalization) the empty corner distance bounds from below the in-out distance. So, the inapproximability results also carry on to the in-out distance. 

\begin{lemma}
\label{lem:IOtoE}
If $L \subset  \R^n$ is a convex body such that $\lambda_{\Delta,o}(L) = 1$ then
\[
D_e(L;\Delta^n) \leq n \cdot D_{io}(L;\Delta^n).
\]
\end{lemma}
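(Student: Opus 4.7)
The plan is to work in the standard lifted description of the simplex, $\Delta^n = \{x \in \R^{n+1}_{\geq 0} : \sum_i x_i = 1\}$, in which every homothet of $\Delta$ of coefficient $\lambda$ has the form $\{x \in \R^{n+1}_{\geq 0} : x_i \geq q_i \text{ for all } i,\; \sum_i x_i = 1\}$ with $q_i \geq 0$ and $\sum_i q_i = 1 - \lambda$. Orthogonal projection onto $\{\sum_i x_i = 0\}$ converts this picture back to the $\R^n$ representation used in the paper, preserving containments and coefficients of homothety. Since both $D_e$ and $D_{io}$ are invariant under translation of $L$, I would first translate $L$ so that $\Delta_o(L) = \Delta^n$.

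Set $\alpha = \lambda_{\Delta,i}(L)$. By compactness of $L$, there is an inscribed homothet $H = \{x : x_i \geq p_i \text{ for all } i\} \subseteq L$ with $p_i \geq 0$ and $\sum_i p_i = 1 - \alpha$ (when $\alpha = 0$, take $H$ to be a single point of $L$ and use the same formal description). For each vertex $v = e_j$ of $\Delta^n$ and each $\delta > 0$, the definition of $E_j := E_v(L;\Delta^n)$ supplies a corner homothet $F_j = \{x : x_j \geq 1 - (E_j - \delta)\}$ with $F_j \cap L = \emptyset$; since $H \subseteq L$, also $F_j \cap H = \emptyset$.

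The crux is a direct calculation: $F_j \cap H$ is cut out by $x_j \geq \max(1-(E_j-\delta),\,p_j)$, $x_i \geq p_i$ for $i \neq j$, and $\sum_i x_i = 1$, so emptiness of this intersection forces $\max(1-(E_j-\delta),\,p_j) + \sum_{i \neq j} p_i > 1$. The second branch of the max yields $\sum_i p_i > 1$, contradicting $\sum_i p_i = 1 - \alpha \leq 1$; hence the first branch holds and $(E_j - \delta) + p_j < 1 - \alpha$. Letting $\delta \to 0$ gives $E_j + p_j \leq 1 - \alpha$ for every vertex. Summing over the $n+1$ vertices and using $\sum_j p_j = 1 - \alpha$ produces $\sum_j E_j \leq n(1-\alpha)$. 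Since $\lambda_{\Delta,o}(L) = 1$, the left side equals $D_e(L;\Delta^n)$ while $1 - \alpha = D_{io}(L;\Delta^n)$, which is the claim.

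The only subtle point is that the sup defining $E_v$ need not be realized by a homothet actually disjoint from $L$, which is the reason for introducing $\delta$ and passing to the limit; beyond that I do not expect an obstacle, as the geometric content reduces to a one-line characterization in barycentric coordinates of when two parallel simplex homothets are disjoint. As a sanity check the bound is tight on the fully truncated simplex $L_\beta = \{x \in \Delta^n : \max_i x_i \leq \beta\}$ for $\beta \in (\tfrac{1}{n+1},1)$, where both sides evaluate to $(n+1)(1-\beta)$.
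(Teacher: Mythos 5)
Your argument is correct, and it takes a genuinely different route from the paper's. The paper proves the stronger Claim~\ref{clm:IOe} by replacing $M$ with its inscribed homothet $L = \Delta_i(M)$ (using $D_e(M) \leq E(L;\Delta_o(M))/\lambda_{\Delta,o}(M)$) and then invoking Lemma~\ref{lem:DeInf}, a general interpolation formula $E(L;H')/\lambda_\Delta(H') = D_e(L;\Delta) + m\bigl(1 - \lambda_{\Delta,o}(L)/\lambda_\Delta(H')\bigr)$, together with the facts $D_e(\Delta;\Delta)=0$ and $m=n$ for the simplex. You instead compute directly in barycentric coordinates: after normalizing $\Delta_o(L)=\Delta$, you write the inscribed homothet as $\{x : x_i \geq p_i\}$ with $\sum p_i = 1-\alpha$, observe that disjointness of the corner homothet $\{x \in \Delta : x_j \geq 1-\mu\}$ from it is equivalent to $\mu + p_j < 1-\alpha$, conclude $E_j + p_j \leq 1-\alpha$, and sum. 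Both proofs pass through the inscribed simplex and both exploit the linear structure of barycentric/support-function coordinates, but yours is more elementary and self-contained, while the paper's proof piggybacks on Lemma~\ref{lem:DeInf}, which it has already set up for use in the proof of Theorem~\ref{thm:DeUnion}. Two small remarks that do not affect correctness: your opening description of homothets with ``$q_i \geq 0$'' is only accurate for homothets contained in $\Delta$ (general homothets can have negative $q_i$), but every homothet you actually use is contained in $\Delta$, so this is harmless; and your tightness example $L_\beta$ has $\lambda_{\Delta,o}(L_\beta)=1$ only for $\beta \geq 1/n$, so the stated range $(\tfrac{1}{n+1},1)$ should be $[\tfrac1n,1)$ to keep the normalization you assumed.
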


\begin{lemma}
\label{lem:IOe-third}
If $L \subset \R^n$ is a convex body such that $\lambda_{\Delta,o}(L) = 1$ then
\[
D_e(L;\Delta^n) \geq D_{io}(L;\Delta^n).
\]
\end{lemma}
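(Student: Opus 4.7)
The plan is to reformulate both distances in barycentric coordinates on $\Delta = \Delta^n$, identified with $\{x \in \R^{n+1}_{\ge 0} : \sum x_j = 1\}$. In these coordinates, the corner at $v_i$ with homothety coefficient $\lambda$ is exactly $\{x \in \Delta : x_i \geq 1 - \lambda\}$, so setting $M_i := \max_{x \in L} x_i$ gives $E_{v_i}(L;\Delta^n) = 1 - M_i$ and hence $D_e(L;\Delta^n) = (n+1) - s$ with $s := \sum_i M_i$. Writing $\mu := \lambda_{\Delta,i}(L)$, we have $D_{io}(L;\Delta^n) = 1 - \mu$, so the claim reduces to the single inequality $\mu \geq s - n$.

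When $s \leq n$ the inequality is immediate from $\mu \geq 0$, so assume $s > n$ and set $\nu := s - n \in (0,1]$. By convexity of $L$ it su
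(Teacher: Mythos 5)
Your reformulation is correct and is a clean way to package the problem. Normalizing so that $\Delta_o(L) = \Delta$ and working in barycentric coordinates $x \in \R^{n+1}_{\geq 0}$, $\sum x_i = 1$, you correctly get $E_{v_i}(L;\Delta) = 1 - M_i$, hence $D_e(L;\Delta) = (n+1) - s$, and the statement reduces to
\[
\lambda_{\Delta,i}(L) \;\geq\; \sum_{i} M_i - n .
\]
This is precisely the content of the paper's \Cref{clm:IOe-second} (rewritten), so you have isolated the right target. However, your proof stops at exactly the point where the real work begins, and the phrase ``by convexity of $L$ it suffices\dots'' is not reassuring: this inequality is \emph{not} an easy consequence of convexity. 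Note that the bound genuinely uses that $L$ touches every facet of $\Delta$ (equivalently $\min_L x_i = 0$ for each $i$, which is what $\lambda_{\Delta,o}(L)=1$ encodes); without that hypothesis the inequality can fail. Moreover, the $M_i$'s alone far from determine where a large inscribed homothet of $\Delta$ can sit inside $L$, so a direct construction (pick points $p^{(i)} \in L$ attaining $M_i$, take a convex combination, etc.) does not obviously produce the required homothet.

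The paper's own proof of this step is substantially more involved: it first shows (\Cref{lem:inscribed}, via a Carath\'eodory argument) that the inscribed simplex $\Delta_i(L)$ is ``pinned'' by at most $n+1$ supporting half-spaces $T_1,\dots,T_k$ of $L$ with normals summing to zero, then builds from these the covering $A'_0 = \bigcap_j T_j$, $A'_j = \mathrm{cl}(\R^n \setminus T_j)$, shows this covering is inductive, and finally invokes the Kadets-type \Cref{thm:inductive-kadets} of Akopyan--Karasev to conclude $\sum_j \lambda_{\Delta,i}(A_j \cap \Delta) \geq 1$. Bounding $\lambda_{\Delta,i}(A_j \cap \Delta)$ from above by the corresponding empty-corner sizes then gives the claim. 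You should not expect this to reduce to a one-line convexity argument; if you believe you have an elementary proof of $\mu \geq s - n$, write it out in full and test it against the near-equality examples (e.g.\ $L = \conv\{a,b,v_3,\dots,v_{n+1}\}$ with $a,b$ on the edge $[v_1,v_2]$), because that is where a naive argument is most likely to break.
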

Both lemmas are sharp
(see remarks after their proofs). Somewhat surprisingly, the proof of \Cref{lem:IOe-third}
relies on a Kadets-type result proved by Akopyan and Karasev~\cite{akopyan2012kadets}.

\subsection{A characterization of simplices}

The first three theorems stated above are based on the fact that the simplex satisfies the following additivity property.

\begin{defn}
A convex body $K \subset \R^n$ is called outer additive if for every $L = L_1+L_2$ where $L_1,L_2 \subset \R^n$ are convex bodies, it holds that
\[\lambda_{K,o}(L) = \lambda_{K,o}(L_1) + \lambda_{K,o}(L_2).\]
\end{defn}

The additivity of $K$ does not rely on the ``position'' of $K$; it is invariant under invertible affine maps.
Because it is always true that
$\lambda_{K,o}(L) \leq \lambda_{K,o}(L_1) + \lambda_{K,o}(L_2)$, the additivity of $K$ is equivalent to 
$\lambda_{K,o}(L) \geq \lambda_{K,o}(L_1) + \lambda_{K,o}(L_2)$. 

The fact that simplices are outer additive easily follows from the additivity of support functions.
Are simplices the only outer additive convex bodies?

\begin{theorem}
\label{thm:AddIsSimplex}
A convex body is outer additive iff it is a simplex.
\end{theorem}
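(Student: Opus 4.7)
For the ``if'' direction, I would verify directly that for a simplex $\Delta$ with facet unit normals $u_0,\ldots,u_n$, one has the formula
\[\lambda_{\Delta,o}(L) \;=\; \frac{\sum_i \mu_i\, h_L(u_i)}{\sum_i \mu_i\, h_\Delta(u_i)},\]
where $\mu_0,\ldots,\mu_n>0$ are the (essentially unique) coefficients satisfying $\sum_i \mu_i u_i = 0$. The lower bound follows by summing the inequalities $h_L(u_i) \leq \lambda h_\Delta(u_i) + \langle t, u_i\rangle$ weighted by $\mu_i$: the $t$-term drops out, forcing $\lambda \geq \sum_i \mu_i h_L(u_i) / \sum_i \mu_i h_\Delta(u_i)$. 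The matching upper bound comes from solving the resulting $(n+1)\times(n+1)$ linear system, which is nondegenerate since $\sum_i \mu_i h_\Delta(u_i) > 0$. The right-hand side is linear in $h_L$, and $h_{L_1+L_2}=h_{L_1}+h_{L_2}$ immediately yields outer additivity.

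For the converse, assume $K$ is outer additive. The functional $\phi(L):=\lambda_{K,o}(L)$ is Minkowski-additive by hypothesis, and is also positively $1$-homogeneous, translation-invariant, monotone, and Hausdorff-continuous on convex bodies. By the classical representation theorem for such degree-one valuations (a consequence of Minkowski's mixed-volume theory; see \cite{schneider2013convex}), there is a positive Borel measure $\mu$ on $\S^{n-1}$ with $\int u\,d\mu = 0$ and $\int h_K\,d\mu = 1$ such that
\[\lambda_{K,o}(L) \;=\; \int_{\S^{n-1}} h_L\,d\mu \qquad \text{for every convex body } L.\]

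The main structural step is to show $\mu$ is concentrated on exactly $n+1$ unit vectors in simplex position. For any $L$, let $(\lambda^*,t^*)$ attain the infimum defining $\lambda_{K,o}(L)$; then $h_L - \lambda^* h_K - \langle t^*,\cdot\rangle \leq 0$ pointwise on $\S^{n-1}$, while integrating against $\mu$ gives exactly $\lambda_{K,o}(L) - \lambda^* = 0$ by the moment conditions. Hence this nonpositive difference vanishes on $\operatorname{supp}(\mu)$, so $h_L|_{\operatorname{supp}(\mu)}$ lies in the $(n+1)$-dimensional affine subspace spanned by $h_K|_{\operatorname{supp}(\mu)}$ and the $n$ coordinate functionals $u \mapsto u_j$, for \emph{every} convex body $L$. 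But the image of $L \mapsto h_L|_F$ over convex bodies contains a nonempty open subset of $\R^{|F|}$ for any finite $F \subset \S^{n-1}$ (combining translates, ball-summands, and segments of varying direction produces enough independent perturbations across the coordinates of $F$), so $|\operatorname{supp}(\mu)| \leq n+1$. To rule out $|\operatorname{supp}(\mu)| \leq n$: if $\operatorname{supp}(\mu)$ lay in a proper subspace $V \subsetneq \R^n$, then $L = \varepsilon B_V + B_{V^\perp}$ gives $\int h_L\,d\mu \to 0$ as $\varepsilon \to 0$, while $\lambda_{K,o}(L)$ remains bounded below by a positive constant determined by the width of $K$ in $V^\perp$, a contradiction. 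Therefore $\mu = \sum_{i=1}^{n+1} \mu_i\,\delta_{u_i}$ with $\mu_i>0$, $\sum_i \mu_i u_i = 0$, and $\{u_i\}$ in simplex position.

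To conclude, define $\tilde K := \{x \in \R^n : \langle x, u_i\rangle \leq h_K(u_i),\ i = 1,\ldots,n+1\}$, a bona fide simplex. Trivially $K \subseteq \tilde K$, and since each facet of $\tilde K$ is nonempty one has $h_{\tilde K}(u_i) = h_K(u_i)$. Therefore
\[\lambda_{K,o}(\tilde K) \;=\; \sum_i \mu_i\, h_{\tilde K}(u_i) \;=\; \sum_i \mu_i\, h_K(u_i) \;=\; 1,\]
so $\tilde K \subseteq K + t$ for some $t \in \R^n$. Chaining, $K \subseteq \tilde K \subseteq K + t$ forces $K \subseteq K + t$, and boundedness of $K$ gives $t = 0$; hence $K = \tilde K$ is a simplex. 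The main obstacle is the concentration step in paragraph three: carefully establishing that the $\mu$ produced by the representation theorem must be a discrete measure on exactly $n+1$ points in simplex position.
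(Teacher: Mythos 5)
Your proof is correct in outline, but it takes a genuinely different route from the paper for the nontrivial (``only if'') direction, so a comparison is warranted.

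Your ``if'' direction is essentially the paper's \Cref{clm:SimAdd}: both hinge on additivity of support functions and the fact that $\Delta_o(L)$ is cut out by the $n+1$ supporting hyperplanes in the facet-normal directions. Your formula $\lambda_{\Delta,o}(L) = \sum_i \mu_i h_L(u_i)/\sum_i \mu_i h_\Delta(u_i)$ is just an explicit coordinate form of the same observation.

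For the converse, the paper argues purely geometrically: \Cref{lem:notAdditive} extracts a small parallelogram $L_1+L_2 \subset K$ witnessing failure of additivity whenever $\partial K$ is not covered by $\Phi(a)\cup\Phi(b)$, and then an induction on dimension shows $K = \{a\}*B_u$ with $B_u$ an $(n-1)$-simplex. You instead linearize: you observe that $\phi := \lambda_{K,o}$ is a Minkowski-additive, translation-invariant, monotone, continuous, degree-one functional, appeal to an integral representation $\phi(L)=\int h_L\,d\mu$ with $\mu \geq 0$, $\int u\,d\mu = 0$, $\int h_K\,d\mu = 1$, and then prove $\mu$ is a sum of $n+1$ atoms in simplex position by a complementary-slackness argument on the support of $\mu$ together with a perturbation argument and a proper-subspace exclusion. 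This concentration argument is sound: the open-image claim for $L \mapsto h_L|_F$ can be realized with $L = \sum_j \alpha_j\,\conv\bigl(B\cup\{(1+\varepsilon)w_j\}\bigr)$, $w_j\in F$, and your subspace argument combined with $\sum_i\mu_i u_i=0$, $\mu_i>0$ does force exactly $n+1$ atoms with any $n$ of the $u_i$ independent. The final step $K=\widetilde K$ is clean (from $K\subseteq K+t$ one gets $\langle t,u\rangle\ge 0$ for all $u$, hence $t=0$).

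Two places to tighten. First, the representation step: this is not literally ``a classical representation theorem for degree-one valuations'' (Minkowski-additivity is not the same as the valuation property, and without monotonicity such representations can fail). You should either cite a precise statement or give the short direct argument: $\tilde\phi(h_{L_1}-h_{L_2}) := \phi(L_1)-\phi(L_2)$ is well-defined (by additivity), linear, bounded (by monotonicity, since $h_{L_1}\le h_{L_2}+c$ implies $\phi(L_1)\le\phi(L_2)+c\,\phi(B)$), and positive on nonnegative functions of the form $h_{L_1}-h_{L_2}$; density of differences of support functions in $C(\S^{n-1})$ and Riesz representation then give the positive measure. Second, you implicitly need $K$ full-dimensional for $\lambda_{K,o}$ to be finite and continuous; as in the paper, one should reduce to this case by passing to the affine hull of $K$.

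The trade-off: the paper's proof is entirely elementary and self-contained (a local geometric lemma plus dimension induction), whereas yours trades the induction for a functional-analytic linearization; it gives a somewhat conceptual reason for the theorem ($\lambda_{K,o}$ is linear exactly when $K$ is polyhedral with $n+1$ facets), at the cost of invoking machinery the paper deliberately avoids.
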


\begin{remark*}
The theorem is true even when the convex bodies $L_1,L_2$ in the definition of outer additivity
are restricted to be intervals. 
\end{remark*}

\subsection{Roadmap} First, \Cref{sec:outAdd} concerns outer additivity (\Cref{thm:AddIsSimplex}).
Second, \Cref{sec:IO} is about the in-out distance (\Cref{thm:Dio}). Third, \Cref{sec:De} treats the empty corner distance 
(\Cref{thm:De}, \Cref{thm:DeUnion}, \Cref{lem:DeSym} and \Cref{thm:Simplex} are proved in \Cref{sec:E-bounds}).
Finally, in \Cref{sec:relateD} we relate the two distances (\Cref{lem:IOtoE} is proved in \Cref{sec:IOtoE},
and \Cref{lem:IOe-third} is proved in \Cref{sec:IOtoE-two}).

\section{Outer additivity}
\label{sec:outAdd}

Fix $n$ and let $\Delta = \Delta^{n}$.
Denote the vertices of $\Delta$ by $v_1,\ldots,v_{n+1}$.
For each $j \in [n+1]$, let $F_j$ be the hyperplane that contains the facet that is opposite to~$v_j$. Call $F_1,\ldots,F_{n+1}$ the facet hyperplanes of $\Delta$. Let $u_j$ be the unit vector and $b_j \in \R$ be such that
$F_j  = \{x\in\mathbb{R}^n:\ip{u_j}{x} = b_j\}$
and $\ip{u_j}{v_j} > b_j$.

For $u \in \S^{n-1}$ and a convex body $K \subset \R^n$, denote by $\mathcal{S}(K,u)$
the supporting hyperplane of $K$ in direction $u$, which is of the form 
\[\mathcal{S}(K,u)= u^\perp + h_K(u) u.\]

\subsection{Simplices are outer additive}

The simplex is outer additive due to the following well-known additivity of support functions: 
$h_{L_1+L_2} = h_{L_1} + h_{L_2}$.

\begin{claim}
\label{clm:SimAdd}
All simplices are outer additive.
\end{claim}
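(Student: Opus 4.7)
The plan is to write $\lambda_{\Delta,o}(L)$ as a fixed positive linear functional of the support-function values $h_L(-u_1), \ldots, h_L(-u_{n+1})$ at the outer facet normals of $\Delta = \Delta^n$; since Minkowski sum is additive on support functions, outer additivity will then be immediate. By affine invariance of the property it suffices to treat $\Delta^n$. The first ingredient is a standard fact about simplices: the inward normals $u_1,\ldots,u_{n+1}$ admit a unique (up to positive scaling) linear dependence $\sum_j \alpha_j u_j = 0$ with all $\alpha_j>0$, and any $n$ of them are linearly independent (any further dependence would provide a second element of the one-dimensional kernel of the linear map $\R^{n+1}\to\R^n$ sending $\beta \mapsto \sum \beta_j u_j$, which is absurd). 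I would rescale so that $\sum_j \alpha_j h_\Delta(-u_j) = 1$; this normalization makes sense because the sum is translation-invariant in $\Delta$ (using $\sum_j \alpha_j u_j = 0$) and is positive once $\Delta$ is translated to contain the origin.

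The core identity to establish is
\[\lambda_{\Delta,o}(L) = \sum_{j} \alpha_j h_L(-u_j)\]
for every convex body $L \subset \R^n$. Since $\Delta$ equals the intersection $\bigcap_j \{y : \ip{u_j}{y}\geq b_j\}$, the inclusion $L \subseteq x + \lambda\Delta$ is equivalent to the $n+1$ inequalities $h_L(-u_j) \leq \ip{-u_j}{x} + \lambda h_\Delta(-u_j)$ for $j \in [n+1]$. Multiplying the $j$th inequality by $\alpha_j$ and summing causes the $x$-term to cancel via $\sum_j \alpha_j u_j = 0$, yielding the lower bound $\sum_j \alpha_j h_L(-u_j) \leq \lambda$. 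For the matching upper bound, set $\lambda := \sum_j \alpha_j h_L(-u_j)$ and $r_j := \lambda h_\Delta(-u_j) - h_L(-u_j)$, so that $\sum_j \alpha_j r_j = 0$. Fix any index $k$ and solve the square linear system $\ip{u_j}{x^*} = r_j$ for $j \neq k$; this has a unique solution $x^*$ because $\{u_j\}_{j\neq k}$ is a basis of $\R^n$. Evaluating $\sum_j \alpha_j u_j = 0$ at $x^*$ then forces $\ip{u_k}{x^*} = r_k$ as well, so $x^*$ witnesses $L \subseteq x^* + \lambda \Delta$ and gives $\lambda_{\Delta,o}(L) \leq \lambda$.

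The rest is immediate: using $h_{L_1+L_2} = h_{L_1} + h_{L_2}$, the identity gives $\lambda_{\Delta,o}(L_1+L_2) = \lambda_{\Delta,o}(L_1) + \lambda_{\Delta,o}(L_2)$, which is outer additivity. I expect the upper bound in the identity to be the main (mildly) technical step; the algebraic coincidence is that the same positive linear dependence that drove the lower bound also forces the square system obtained by dropping any one of the $n+1$ facet constraints to satisfy the dropped one automatically. This is specific to convex bodies whose facet normals admit a strictly positive linear dependence of corank one, i.e.\ simplices, which already hints at the converse direction of \Cref{thm:AddIsSimplex}.
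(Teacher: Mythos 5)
Correct, and essentially the same approach as the paper: both reduce the claim to the additivity of support functions via the fact that the minimal circumscribing homothet of $\Delta$ is controlled by the support values of $L$ in the $n+1$ facet-normal directions. The paper asserts this fact geometrically in one line (the homothet cut out by $\mathcal{S}(L,-u_1),\ldots,\mathcal{S}(L,-u_{n+1})$ is $\Delta_o(L)$), whereas you establish the equivalent scalar identity $\lambda_{\Delta,o}(L)=\sum_j \alpha_j h_L(-u_j)$ explicitly via the unique positive linear dependence $\sum_j\alpha_j u_j=0$ among the inward normals, which supplies a detail the paper leaves implicit.
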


\begin{proof}
Let $L = L_1+L_2$ for two convex bodies $L_1,L_2$.
The simplex $\Delta = \Delta^n$ has $n+1$ facets with unit normals $u_1,\ldots,u_{n+1}$. 
The $n+1$ supporting hyperplanes $\mathcal{S}(L,-u_1), \ldots,\mathcal{S}(L,-u_{n+1})$ define a homothet of $\Delta$ that contains $L$, and this homothet is exactly $\Delta_o(L)$.
The same is true for each of $L_1,L_2$.
By the additivity of support functions, 
\begin{equation}
\label{eqn:DeltaO}
\Delta_{o}(L)
= \Delta_{o}(L_1)+\Delta_{o}(L_2).
\qedhere    
\end{equation}
\end{proof}

\subsection{Outer additive bodies are simplices}

For $u \in \S^{n-1}$ and a convex body $K \subset \R^n$,
let
\begin{align*}
A_u = A_u(K)&= \mathcal{S}(K,-u) \cap K
\intertext{and}
B_u = B_u(K)&= \mathcal{S}(K,u) \cap K.
\end{align*}
For $x,y \in \R^n$, denote by $[x,y]$
the interval between $x,y$.
The main property we use is that
\begin{align}
\label{eq:segment}
a \in A_u, \ b \in B_u \quad \Longrightarrow \quad
\lambda_{K,o}([a,b]) = 1.
\end{align}
For $x \in \partial K$,
denote by $\Phi(x) = \Phi_K(x)$ the collection of $y \in \partial K$ such that 
$[x,y] \subset \partial K$.

The key step is the following geometric lemma that gives a necessary ``local'' condition for being outer additive. 
\begin{lemma}
\label{lem:notAdditive}
Let $K \subset \R^n$ be a convex body of dimension $n$.
If there are $u \in \S^{n-1}$, $a \in A_u$, $b \in B_u$ and $x \in \partial K \setminus (\Phi(a) \cup \Phi(b))$ then $K$ is not outer additive.
\end{lemma}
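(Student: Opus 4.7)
The plan is to exhibit explicit intervals $L_1, L_2 \subset \R^n$ for which $\lambda_{K,o}(L_1) + \lambda_{K,o}(L_2) > \lambda_{K,o}(L_1 + L_2)$, which is exactly what it means for $K$ to fail outer additivity. Note that for any interval, $\lambda_{K,o}([p,q]) = \|q - p\|_{K-K}$, the Minkowski norm of $q-p$ relative to the central-difference body $K-K$, and the property \eqref{eq:segment} of $a, b$ can be rephrased as $\|b-a\|_{K-K} = 1$.

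The natural first attempt is $L_1 = [a,x]$ and $L_2 = [x,b]$. By additivity of support functions, $L_1+L_2$ is a planar parallelogram; since $a \in A_u$ and $b \in B_u$, one computes that $L_1+L_2$ has the same width $h_K(u) + h_K(-u)$ in direction $u$ as $K$ itself, so that $\lambda_{K,o}(L_1+L_2) \geq 1$, and a direct translation argument (using $a, b, x \in K$) gives equality. On the other hand, writing $v_1 = x-a$ and $v_2 = b-x$, the triangle inequality applied to $v_1 + v_2 = b-a$ yields $\lambda_{K,o}(L_1) + \lambda_{K,o}(L_2) = \|v_1\|_{K-K} + \|v_2\|_{K-K} \geq 1$. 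The hypothesis that $x$ lies in neither $\Phi(a)$ nor $\Phi(b)$ — so that neither $[a,x]$ nor $[x,b]$ is contained in a face of $K$ — is designed to force this triangle inequality to be strict: $v_1$ and $v_2$ cannot simultaneously lie in a common facet-cone of the unit ball of $\|\cdot\|_{K-K}$, and strict subadditivity follows.

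The main obstacle is a degenerate case in which the triangle inequality remains tight despite the hypothesis — for instance, for centrally symmetric $K$ such as a cube, one can have $v_1$ and $v_2$ both supporting $K-K$ at a common vertex of its unit ball. To handle this, I would insert the midpoint $m = (a+b)/2$ (which lies in $K$ by convexity) and take $L_1 = [a, m]$, $L_2 = [m, x]$. Then $\lambda_{K,o}(L_1) = \tfrac{1}{2}$ and $\lambda_{K,o}(L_2) = \|x - m\|_{K-K}$, while the parallelogram $L_1 + L_2$ only has width $(h_K(u) + h_K(-u))/2$ in direction $u$, so its outer coefficient is no longer forced to be $\geq 1$. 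The goal in this case is to show that $L_1 + L_2$ fits in a translate of $K$ with coefficient strictly less than $\tfrac{1}{2} + \|x-m\|_{K-K}$; this is the delicate part of the proof, and here again the hypothesis (which guarantees that $x - m$ is genuinely transversal to the $u$-direction of $K$) provides the geometric slack needed to conclude.
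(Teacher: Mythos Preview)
Your proposal does not yet constitute a proof; both attempts have real gaps, and the second leaves precisely the hard step undone.

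In the first attempt, the claim that $\lambda_{K,o}([a,x]+[x,b]) = 1$ is unjustified and in fact false in general. Translating the parallelogram by $-x$ gives vertices $a,\,b,\,x,\,a+b-x$; the first three lie in $K$, but there is no reason for the fourth (the reflection of $x$ through $(a+b)/2$) to lie in $K$. For a concrete failure, take $K$ the quadrilateral with vertices $(0,0),(3,0),(2,1),(0,1)$, $u=(0,1)$, $a=(1,0)$, $b=(1,1)$, $x=(5/2,1/2)$: no translate of $K$ contains the resulting parallelogram. You correctly note that even when the equality does hold, the triangle inequality for $\|\cdot\|_{K-K}$ can be tight (your cube example), so this route is blocked either way.

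In the second attempt, the assertion that $L_1+L_2$ has width $(h_K(u)+h_K(-u))/2$ in direction $u$ is wrong unless $\langle u,x\rangle = \langle u,m\rangle$: the segment $[m,x]$ contributes $|\langle u,x-m\rangle|$ to the width, which is generally nonzero. More importantly, you never establish the inequality $\lambda_{K,o}(L_1+L_2) < \tfrac12 + \|x-m\|_{K-K}$; calling it ``delicate'' and appealing to ``geometric slack'' is not an argument, and it is unclear how the hypothesis on $\Phi(a),\Phi(b)$ would enter there.

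The idea you are missing is to introduce a \emph{second} extremal direction. Pick $u'$ with $x\in A_{u'}$ and any $y\in B_{u'}$, so that $\lambda_{K,o}([x,y])=1$ by \eqref{eq:segment} as well. The hypothesis $x\notin\Phi(a)\cup\Phi(b)$ says exactly that $[a,x]$ and $[b,x]$ meet the interior of $K$, so their midpoints $(a+x)/2$ and $(b+x)/2$ are interior points. This lets you push the segment $[\tfrac{a+y}{2},\tfrac{a+x}{2}]\subset K$ past its endpoint to $[\tfrac{a+y}{2},\,\tfrac{a+y}{2}+\tfrac{\alpha(x-y)}{2}]\subset K$ for some $\alpha>1$, and likewise with $b$ in place of $a$. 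The resulting parallelogram
\[
L \;=\; \bigl[\tfrac{a+y}{2},\tfrac{b+y}{2}\bigr] \;+\; \bigl[0,\tfrac{\alpha(x-y)}{2}\bigr]
\]
lies in $K$, so $\lambda_{K,o}(L)\le 1$, while its two summands are half of $[a,b]$ and $\tfrac{\alpha}{2}$ times $[x,y]$, giving outer coefficients $\tfrac12$ and $\tfrac{\alpha}{2}$ with sum $\tfrac{1+\alpha}{2}>1$. The point is that both summands are scalings of segments whose outer coefficient is \emph{exactly known} via \eqref{eq:segment}, so no estimate on $\|\cdot\|_{K-K}$ is ever needed.
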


\begin{proof}
Suppose the condition is satisfied with $u,a,b,x$. 
Let $u' \in \S^{n-1}$ be such that $x\in A_{u'}$
and let $y \in B_{u'}$. 
Because $a, x, y \in K$, 
\[[ \tfrac{a+y}{2}, \tfrac{a+x}{2}] \subset K.\] 
By choice of $x$, it holds that
$[a,x] \not \subseteq \partial K$. So, $[a,x]$ must contain an interior point of $K$ which implies  \[\tfrac{a+x}2 \not \in \partial K.\] 
We can conclude that there is $\alpha_1 >1$ such that
\[[ \tfrac{a+y}2, \tfrac{a+y}2 + \tfrac{\alpha_1(x-y)}2 ] \subset K .\]
By the same argument for the points $b, x, y$, 
there is $\alpha_2 > 1$ so that
\[[ \tfrac{b+y}{2}, \tfrac{b+y}{2} + \tfrac{\alpha_2(x-y)}2 ] \subset K.\]
Setting $\alpha = \min\{\alpha_1, \alpha_2\}$, we conclude
that the following 
parallelogram $L$ lies in $K$:
\[
L = \conv \{ 
\tfrac{a+y}2, \tfrac{a+y}2 + \tfrac{\alpha(x-y)}2, 
\tfrac{b+y}2, \tfrac{b+y}2 + \tfrac{\alpha(x-y)}2
\}.
\]
It can be expressed as $L = L_1 + L_2$ with
\[
L_1 = [ \tfrac{a+y}2, \tfrac{b+y}2 ] 
\text{\ \ \ \ and\ \ \ \ } 
L_2 = [ 0, \tfrac{\alpha(x-y)}2 ] .
\]
Because $L \subset K$, 
\[\lambda_{K,o}(L) \leq 1.\]
The interval $L_1$ is parallel to $[a,b]$. 
Because $a \in A_u$ and $b \in B_u$,
\[\lambda_{K,o}([a,b]) = 1,\]
so
\[\lambda_{K,o}(L_1) = \lambda_{K,o}(\tfrac{1}{2}[a,b]) = \tfrac{1}{2}.\]
By the same argument
applied to $L_2$,
\[\lambda_{K,o}(L_2) = \lambda_{K,o}(\tfrac{\alpha}{2}[x,y]) = \tfrac{\alpha}{2}.\]
Because $\alpha >1$, it follows that $K$ is not outer additive.
\end{proof}

\begin{proof}[Proof of \Cref{thm:AddIsSimplex}]
We already know that simplices are outer additive.
It remains to prove that outer additive convex bodies are simplices. 
The proof is by induction on~$n$.
The case $n=1$ is trivial.
For $n>1$, assume that $K$ is outer additive and of full dimension. 
Let $u \in \S^{n-1}$ be such that $B_u$ has maximum dimension
(the dimension of a set is the minimum dimension of an affine space containing it).
Let $a \in A_u$.

\begin{claim}
\label{clm:partialK}
If $\partial K \subseteq B_u \cup \Phi(a)$
then $K = \{a\} * B_u$.    
\end{claim}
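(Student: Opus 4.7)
The plan is to establish the non-trivial inclusion $K \subseteq \{a\} * B_u$, since the reverse is immediate from $a \in K$ and $B_u \subseteq K$. Using $K = \conv(\partial K)$ for a convex body, together with $B_u \subseteq \{a\} * B_u$ and the hypothesis $\partial K \subseteq B_u \cup \Phi(a)$, I reduce the problem to showing $\Phi(a) \subseteq \{a\} * B_u$.

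To prove this, I fix $y \in \Phi(a)$ and let $y^*$ be the exit point of the ray from $a$ through $y$, so that $y \in [a, y^*]$ and $y^* \in \partial K$. By the hypothesis, $y^* \in B_u \cup \Phi(a)$. If $y^* \in B_u$, then $y \in [a, y^*] \subseteq \{a\} * B_u$ and the conclusion follows. The remaining possibility $y^* \in \Phi(a) \setminus B_u$ is the heart of the proof, and I rule it out by a dimension-counting argument.

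Write $d^* = (y^* - a)/\|y^* - a\| \in \S^{n-1}$, $T(d) = \max\{t \geq 0 : a + td \in K\}$ and $\phi(d) = a + T(d) d$. Although $d^*$ may lie on the boundary of the tangent cone $T_a K$, the radial function $T$ is continuous on $\mathrm{int}(T_a K) \cap \S^{n-1}$, and because $T_a K$ is full-dimensional, every neighborhood of $d^*$ meets this interior. I can therefore take an open set $U \subset \mathrm{int}(T_a K) \cap \S^{n-1}$ having $d^*$ as a limit point, on which $\phi$ is continuous with $\phi(d) \to y^*$ as $d \to d^*$ from within $U$. Since $y^* \notin B_u$ and $B_u$ is closed, after shrinking $U$ I have $\phi(d) \notin B_u$ for all $d \in U$, and the hypothesis forces $\phi(d) \in \Phi(a)$, whence $[a, \phi(d)] \subseteq \partial K$.

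Choosing a smaller open $U'$ with $\overline{U'} \subset U$ and setting $T_0 = \min_{d \in \overline{U'}} T(d) > 0$, the set $\{a + td : d \in U',\ t \in (0, T_0)\}$ lies inside $\partial K$. But this set is the image of the open set $U' \times (0, T_0)$ under the smooth map $(d, t) \mapsto a + td$, whose derivative has full rank whenever $t > 0$ (the tangent space to $\S^{n-1}$ at $d$ together with $d$ itself span $\R^n$); hence the image is open in $\R^n$. This contradicts the fact that $\partial K$ has empty interior in $\R^n$, ruling out the bad case and completing the proof. The main technical point is the one-sided continuity of $\phi$ at $d^*$ when approached through interior directions, which I expect to follow from standard upper and lower semi-continuity of radial functions on convex bodies.
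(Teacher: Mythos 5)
Your overall architecture is sound and genuinely different from the paper's. The paper's proof slices $K$ by an arbitrary $2$-plane $F$ through $a$ and an interior point $p$, and then argues elementarily that the planar section $F\cap K$ must be the triangle $\{a\}*(F\cap B_u)$; this sidesteps radial functions entirely. Your reduction (via $K=\conv(\partial K)$ to $\Phi(a)\subseteq\{a\}*B_u$, then pushing each $y\in\Phi(a)$ out to the exit point $y^*$ and ruling out $y^*\in\Phi(a)\setminus B_u$) is a valid alternative skeleton.

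However, there is a genuine gap exactly where you flag it: the claim that $\phi(d)\to y^*$ as $d\to d^*$ through an \emph{open} subset of $\mathrm{int}(T_aK)\cap\S^{n-1}$ does \emph{not} follow from standard semi-continuity of radial functions. From a boundary base point, the radial function can be genuinely discontinuous at boundary directions of the tangent cone. For instance, take $K=\conv\bigl(\{(1,0,0)\}\cup\{(0,y,z):y^2+(z+1)^2\le 1\}\bigr)\subset\R^3$, $a=(0,0,0)$, $d^*=(1,0,0)$: here $[a,y^*]$ with $y^*=(1,0,0)$ lies in $\partial K$ and $T(d^*)=1$, but along $d=(1,\sqrt{\eps},-\eps)$ one computes $T(d)\to 2/3$ as $\eps\to 0^+$. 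So $\phi$ does not converge to $y^*$ when $d\to d^*$ is allowed to range over a full neighborhood in $\mathrm{int}(T_aK)$, and the open set $U$ you want (on which $\phi$ is uniformly close to $y^*$) is not automatic; its construction would have to depend on the local geometry of $K$.

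What \emph{is} true, and would repair the step, is continuity along a single segment: the gauge $g$ of $K-a$ is a closed convex function, finite on a chord $[d_1,d^*]$ with $d_1\in\mathrm{int}(T_aK)$ and $T(d^*)>0$, and a finite lower semi-continuous convex function of one variable on a closed interval is continuous at the endpoints (use $g(t)\le(1-t)g(0)+tg(1)$ together with lower semi-continuity). So $\phi\to y^*$ along an arc approaching $d^*$ from the interior. Once you have a single $d\in\mathrm{int}(T_aK)$ with $\phi(d)\notin B_u$, you are done \emph{without} the dimension count: $d\in\mathrm{int}(T_aK)$ forces the open segment $(a,\phi(d))$ to meet $\mathrm{int}(K)$, hence $\phi(d)\notin\Phi(a)$; combined with $\phi(d)\notin B_u$ this contradicts $\phi(d)\in\partial K\subseteq B_u\cup\Phi(a)$. (In fact, from $\partial K\subseteq B_u\cup\Phi(a)$ one sees directly that $\phi(\mathrm{int}(T_aK)\cap\S^{n-1})\subseteq B_u$, so your continuity claim, were it to hold freely, would immediately force $y^*\in\overline{B_u}=B_u$.) The full-rank / open-image argument is thus a detour around a much shorter contradiction, and the proof as written rests on a continuity statement that is false in the generality you assert. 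The paper's $2$D slicing argument avoids all of these issues and is considerably more elementary.
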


\begin{proof}
Assume $\partial K \subseteq B_u \cup \Phi(a)$.
The inclusion $K \supseteq \{a\} * B_u$ is trivial. It remains to prove the other inclusion.
Let $p$ be a point in the interior of $K$.
Let $F$ be a two-dimensional plane containing $a$ and $p$. Denote by $L$ the two-dimensional convex body $L = F \cap K$, so  $\partial L = F \cap \partial K$. By assumption, $\partial L \subseteq (F \cap B_u) \cup (F \cap \Phi(a))$.
For every $x \in F \cap \Phi(a)$, we have
$[a,x] \subset \partial L$. So $F \cap \Phi(a)$ comprises two line segments. Consequently, $F \cap B_u$ is a closed interval of positive length, and $L$ is the triangle $\{a\} * (F \cap B_u) = F \cap (\{a\} * B_u)$.
So, we showed that $p \in \{a\} * B_u$ and consequently $K \subseteq \{a\} * B_u$.
\end{proof}

\begin{claim}
\label{clm:dimBu}
The dimension of $B_u$ is $n-1$.
\end{claim}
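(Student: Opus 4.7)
The plan is to argue by contradiction: assume $\dim B_u \leq n-2$ and exhibit a direction $v \in \S^{n-1}$ with $\dim B_v > \dim B_u$, contradicting the maximal choice of $u$.

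Fix any $a \in A_u$. If $\partial K \subseteq B_u \cup \Phi(a)$, then \Cref{clm:partialK} gives $K = \{a\} * B_u$, a set of dimension at most $\dim B_u + 1 \leq n-1$, contradicting $\dim K = n$. So there exists $x \in \partial K$ with $x \notin B_u$ and $x \notin \Phi(a)$. If there were some $b \in B_u$ with $x \notin \Phi(b)$, applying \Cref{lem:notAdditive} to $u, a, b, x$ would contradict outer additivity; hence for every $b \in B_u$, $[b,x] \subseteq \partial K$, and taking the union over $b$ yields
\[
B_u * \{x\} \;=\; \bigcup_{b \in B_u} [b,x] \;\subseteq\; \partial K.
\]
Since $x \in \partial K$ and $x \notin B_u = \mathcal{S}(K,u) \cap K$ we have $x \notin \mathcal{S}(K,u)$, so $x$ lies outside the affine hull of $B_u$ and therefore $\dim(B_u * \{x\}) = \dim B_u + 1$.

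It remains to show that any convex set $S \subseteq \partial K$ is contained in some exposed face $B_v$ of $K$. Pick $p$ in the relative interior of $S$, and let $H = \mathcal{S}(K,v)$ be a supporting hyperplane of $K$ at $p$, so $K \subseteq \{y : \langle v, y\rangle \leq h_K(v)\}$ and $\langle v, p\rangle = h_K(v)$. Given any $q \in S$, the relative-interior property provides $q' = p - \varepsilon(q-p) \in S$ for some $\varepsilon > 0$, and the identity $p = \tfrac{\varepsilon}{1+\varepsilon} q + \tfrac{1}{1+\varepsilon} q'$ forces $\langle v, q\rangle = h_K(v)$, for otherwise the convex combination would fall strictly below $h_K(v)$. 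Hence $S \subseteq H \cap K = B_v$. Applied to $S = B_u * \{x\}$, this produces $v$ with $\dim B_v \geq \dim B_u + 1$, contradicting the maximality of $\dim B_u$.

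The main obstacle is making outer additivity bite: \Cref{clm:partialK} ensures the required $x$ off the cone $\{a\} * B_u$ exists whenever $K$ has too little flatness in direction $u$, and once this $x$ forces a whole new $(\dim B_u + 1)$-dimensional convex slab onto $\partial K$, the upgrade from a convex boundary set to an exposed face is a standard piece of convex geometry.
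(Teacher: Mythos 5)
Your proof is correct and takes essentially the same approach as the paper: split on whether $\partial K \subseteq B_u \cup \Phi(a)$, and in the second case use \Cref{lem:notAdditive} to place the $(\dim B_u + 1)$-dimensional set $B_u * \{x\}$ in $\partial K$ and deduce a larger exposed face, contradicting maximality of $\dim B_u$. The only real difference is that you spell out, via a supporting hyperplane at a relative-interior point of $B_u * \{x\}$, the step that a convex subset of $\partial K$ lies in some exposed face $B_v$, which the paper asserts more tersely.
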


\begin{proof}
If $\partial K \subseteq B_u \cup \Phi(a)$
then by \Cref{clm:partialK} we know that $K = \{a\} * B_u$ which implies the claim because $K$ has full dimension. 
Otherwise, there is $x \in \partial K$ such that
$x \not \in B_u \cup \Phi(a)$.
Because $K$ is outer additive, by \Cref{lem:notAdditive}, for all $b \in B_u$,
we know that $x \in \Phi(b)$.
Assume towards a contradiction that the dimension of $B_u$ is smaller than $n-1$.
Let $F$ be a hyperplane so that $x \in F$ and $B_u \subseteq F$.
It follows that $F$ is a supporting hyperplane of $K$ and that $F \cap K$ has dimension larger than that of $B_u$ which is a contradiction. 
\end{proof}

\begin{claim}
If $b$ is in the relative interior of $B_u$ then $\Phi(b) = B_u$.
\end{claim}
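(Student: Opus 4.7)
The plan is to establish the two inclusions $B_u \subseteq \Phi(b)$ and $\Phi(b) \subseteq B_u$ separately, and to note that the forward implication in outer additivity is essentially not needed again here; the claim follows once we know from \Cref{clm:dimBu} that $B_u$ is a facet.

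For the inclusion $B_u \subseteq \Phi(b)$, I would just invoke convexity: since $B_u = \mathcal{S}(K,u) \cap K$ is a face of $K$ it is convex and contained in $\partial K$, so for any $b' \in B_u$ the segment $[b, b']$ lies in $B_u \subseteq \partial K$, giving $b' \in \Phi(b)$.

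For the reverse inclusion, I would argue by contradiction: suppose there is $y \in \Phi(b)$ with $y \notin B_u$, so $\ip{u}{y} < h_K(u)$. Since $b \in \text{relint}(B_u)$ and $\dim B_u = n-1$, I can choose points $p_1, \ldots, p_n \in B_u$ whose differences $p_i - b$ span the hyperplane $\mathcal{S}(K,u) - b$ and such that $b = \sum_i c_i p_i$ with $c_i > 0$ and $\sum_i c_i = 1$. Because $y \notin \mathcal{S}(K,u)$, the simplex $S = \conv\{y, p_1, \ldots, p_n\} \subseteq K$ is $n$-dimensional. For any $\lambda \in (0,1)$, the point
\[
(1-\lambda) b + \lambda y = \sum_{i=1}^{n} (1-\lambda) c_i \, p_i + \lambda y
\]
is a strict convex combination of all $n+1$ vertices of $S$, hence lies in the relative interior of $S$, which, since $S$ is full-dimensional, is contained in $\text{int}(K)$. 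This contradicts $[b,y] \subset \partial K$, so no such $y$ exists and $\Phi(b) \subseteq B_u$.

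The only potentially delicate step is the construction of the points $p_1, \ldots, p_n \in B_u$ with $b$ strictly inside their convex hull; this is routine from $b \in \text{relint}(B_u)$ and $\dim B_u = n-1$, using that the relative interior of a convex set of dimension $n-1$ admits a spanning $(n-1)$-simplex around any of its points. The rest is just the standard fact that strict convex combinations land in the relative interior, applied to a full-dimensional simplex so that ``relative interior'' coincides with ``interior''.
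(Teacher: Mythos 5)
Your proof is correct, and it takes a genuinely different route to the contradiction than the paper does. After establishing (as you both do, via convexity) that $B_u \subseteq \Phi(b)$, the paper argues topologically: since $b$ is in the relative interior of the $(n-1)$-dimensional face $B_u$, any path on $\partial K$ starting at $b$ and exiting $B_u$ must spend an initial open segment inside $B_u$; applying this to $[b,x]$ yields a point $y\in(b,x)$ with $[b,y]\subset B_u \subset \mathcal{S}(K,u)$, forcing the whole segment $[b,x]$ (hence $x$) into the supporting hyperplane and contradicting $x\notin B_u$. You instead build an $n$-dimensional simplex $S=\conv\{y,p_1,\ldots,p_n\}\subseteq K$ by surrounding $b$ with an $(n-1)$-simplex in $B_u$ and adjoining the off-hyperplane point $y$, then observe that every point of the open segment $(b,y)$ is a strict convex combination of all $n+1$ affinely independent vertices of $S$, hence lies in $\mathrm{int}(S)\subseteq \mathrm{int}(K)$, contradicting $[b,y]\subset\partial K$. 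Your route is more self-contained: it replaces the paper's (correct but somewhat hand-waved) assertion about the local structure of $\partial K$ near a relative-interior point of a facet with a concrete simplex construction and the elementary fact that strict convex combinations of affinely independent points are interior to their hull. The paper's route is shorter on the page but leans on a topological fact whose justification is omitted; yours trades a few more lines for an argument that needs no appeal to boundary topology. Both rely on $\dim B_u = n-1$ in the same essential way.
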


\begin{proof}
The inclusion $B_u \subseteq \Phi(b)$ is trivial.
It remains to prove the other inclusion. 
Assume towards a contradiction that $x \in \Phi(b) \setminus B_u$ so that $[b,x] \subset \partial K$. Because $b$ is in the relative interior of $B_u$ and by \Cref{clm:dimBu},
every continuous path on $\partial K$ from $b$
to outside $B_u$ has an open part inside $B_u$. In particular,
there is $y$ in the interior of $[b,x]$ such that $[b,y] \subset B_u \subset \mathcal{S}(K,u)$. This implies that 
$[b,x] \subset \mathcal{S}(K,u) \cap K = B_u$, which is a contradiction.
\end{proof}

Let $b$ be in the relative interior of $B_u$ so that $\Phi(b) = B_u$.
By \Cref{lem:notAdditive}, because $K$ is outer additive,
\[\partial K = \Phi(a) \cup \Phi(b) = \Phi(a) \cup B_u.\]
By \Cref{clm:partialK},
\[K = \{a\} * B_u.\]
It follows that for every convex body $L \subset \mathcal{S}(K,u)$, we have $\lambda_{K,o}(L) = \lambda_{B_u,o}(L)$.
Hence, $B_u$ is outer additive in $\mathcal{S}(K,u) \cong \R^{n-1}$ because for every convex bodies $L_1,L_2 \subset \mathcal{S}(K,u)$,  \begin{align*}
\lambda_{B_u,o}(L_1+L_2) 
& = \lambda_{K,o}(L_1+L_2) \\
& = \lambda_{K,o}(L_1)+\lambda_{K,o}(L_2) \\
& = \lambda_{B_u,o}(L_1)+\lambda_{B_u,o}(L_2).
\end{align*}
By induction, $B_u$ is an $(n-1)$-dimensional simplex, so $K$ is an $n$-dimensional simplex.
\end{proof}

\section{The in-out distance}
\label{sec:IO}

\begin{proof}[Proof of \Cref{thm:Dio}]
It suffices to consider $L = L_1+L_2$ where $L_1,L_2$ are convex bodies
and prove that
$$D_{io}(L;\Delta) \geq D_{io}(L_1;\Delta)$$
where $\Delta = \Delta^n$.
By translating $L_1$ if needed,
we can assume (without loss of generality) that
\begin{align*}
L_2 \subseteq \Delta_{o}(L_2) \subseteq \frac{\lambda_{\Delta,o}(L_2)}{\lambda_{\Delta,i}(L_1)} L_1 .    
\end{align*}
Consequently,
\begin{align*}
L_1 + L_2 \subseteq \left( 1 + \frac{\lambda_{\Delta,o}(L_2)}{\lambda_{\Delta,i}(L_1)} \right) L_1 
\end{align*}
so that
\begin{align*}
\lambda_{\Delta,i}(L_1+L_2)
& \leq \left( 1 + \frac{\lambda_{\Delta,o}(L_2)}{\lambda_{\Delta,i}(L_1)} \right) \lambda_{\Delta,i}(L_1) \\
& = \lambda_{\Delta,i}(L_1) + \lambda_{\Delta,o}(L_2).
\end{align*}
Combined with \Cref{clm:SimAdd}, 
\begin{align*}
D_{io}(L_1+L_2;\Delta)
& = \lambda_{\Delta,o}(L_1+L_2)
- \lambda_{\Delta,i}(L_1+L_2) \\
& \geq \lambda_{\Delta,o}(L_1) + \lambda_{\Delta,o}(L_2)
-\lambda_{\Delta,i}(L_1) - \lambda_{\Delta,o}(L_2) \\
& = D_{io}(L_1;\Delta).
\qedhere 
\end{align*}

\end{proof}

\begin{remark*}
The only property of $\Delta$ the proof above relies on is outer additivity (which we 
a posteriori know to be equivalent to being a simplex).
\end{remark*}

\section{The empty corner distance}
\label{sec:De}

\subsection{Barycentric coordinates}

For hyperplane $F  = \{x\in\mathbb{R}^n:\langle u,x\rangle = b\}$ with a unit vector $u$, define the oriented distance from $p \in \R^n$ to $F$ by
\[
\dist(p,F) := \ip{u}{p} - b.
\]
The sign of the oriented distance depends on the specific representation of $F$ given by $u,b$.

\begin{lemma}
\label{lem:um-ratios}
Let $v_1,\ldots,v_{n+1}$ be the vertices and $F_1,\ldots,F_{n+1}$ be the facet hyperplanes of $\Delta^{n}$.
For every $p \in \R^n$,
\[
\sum_{j \in [n+1]} \frac{\dist(p,F_j)}{\dist(v_j,F_j)} = 1.
\]
\end{lemma}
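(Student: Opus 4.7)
The plan is to recognize this identity as the standard barycentric-coordinate decomposition on the simplex. Since $\Delta^n$ is full-dimensional, its vertices $v_1,\ldots,v_{n+1}$ are affinely independent, so every $p\in\R^n$ admits a unique representation
\[ p = \sum_{j\in[n+1]} \lambda_j(p)\, v_j, \qquad \sum_{j\in[n+1]} \lambda_j(p) = 1,\]
where the coefficients $\lambda_j(p)$ are affine functions of $p$. The statement will follow immediately once I identify $\lambda_j(p)$ with the ratio $\dist(p,F_j)/\dist(v_j,F_j)$.

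To make that identification, I would consider the affine functional $\ell_j(x) := \dist(x,F_j) = \ip{u_j}{x} - b_j$. By definition of $F_j$ as the facet hyperplane opposite $v_j$, we have $v_k \in F_j$ for every $k \neq j$, hence $\ell_j(v_k)=0$ for $k\neq j$, while $\ell_j(v_j) = \dist(v_j,F_j)\neq 0$ because $v_j \notin F_j$ (this is where full-dimensionality enters). Applying $\ell_j$ to both sides of the barycentric expansion and using affinity together with $\sum_k \lambda_k(p)=1$ gives
\[ \ell_j(p) = \sum_k \lambda_k(p)\,\ell_j(v_k) = \lambda_j(p)\,\ell_j(v_j),\]
so $\lambda_j(p) = \dist(p,F_j)/\dist(v_j,F_j)$. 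Summing over $j$ and using $\sum_j \lambda_j(p)=1$ yields the desired identity.

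There is no real obstacle: the only subtle point is that the oriented distance depends on the choice of representation of $F_j$ (the sign of $u_j$ and the value of $b_j$), but both numerator and denominator transform by the same sign, so each ratio is well defined and the identity is invariant under such choices.
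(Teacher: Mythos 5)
Your proof is correct and takes essentially the same approach as the paper: both write $p$ in barycentric coordinates and identify the $j$-th coefficient with $\dist(p,F_j)/\dist(v_j,F_j)$ by evaluating the affine functional $x\mapsto\ip{u_j}{x}-b_j$ on the expansion, using that it vanishes on $v_k$ for $k\neq j$. Your phrasing via ``affine functions commute with affine combinations'' is a slightly more abstract packaging of the same inner-product computation done explicitly in the paper.
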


\begin{proof}	
Every point \(p \in \R^n\) can be uniquely expressed in barycentric coordinate system (see e.g.~\cite[Section~1]{rockafellar1997convex}):
\begin{align}
\label{eq-bar}
p  = \sum_j \alpha_j  v_j
\ \ \ \text{where}\ \ \ 
\sum_j \alpha_j = 1.
\end{align}
We shall prove that for every $j$,
\[
\alpha_j = \frac{\dist(p,F_j)}{\dist(v_j,F_j)}.
\]
Fix $j$ and take the inner product with $u_j$.
Because $v_j \in F_k$ for $k \not = j$,
\begin{align*}
\langle u_j,p\rangle
&= \sum_k \alpha_k \langle u_j,v_k \rangle \\ &=\alpha_j  \langle u_j,v_j\rangle
+\sum_{k \neq j} \alpha_k b_j 
\\ &= \alpha_j  \langle u_j,v_j\rangle
 + (1-\alpha_j)  b_j
\\ &= b_j  + \alpha_j  \bigl(\langle u_j,v_j\rangle - b_j\bigr).
\end{align*}
Hence,
\[
\alpha_j
=
\frac{\langle u_j,p\rangle - b_j}{\langle u_j,v_j\rangle - b_j}
=
\frac{\dist(p,F_j)}{\dist(v_j,F_j)}.
\qedhere
\]
\end{proof}

\begin{remark*}
Using the barycentric coordinate system, we can think of the empty corners of a convex body $L \subset \R^n$ as follows.
Using the notation from \Cref{lem:um-ratios},
for a point $p \in L$, write $p$ in barycentric coordinates as $(p_v : v \in V(\Delta))$ where
\[p_v = \frac{\dist(p,F_j)}{\dist(v,F_j)}.\]
The (relative) size of the empty corner with respect to $v$ is
\[\frac{E_v(L;\Delta)}{\lambda_{\Delta,o}(L)}
= \inf \{1 - p_v : p \in L\}
= 1 - \sup \{p_v : p \in L\}.\]    
\end{remark*}

\subsection{Bounding the distance}
\label{sec:E-bounds}

\begin{claim}
\label{clm:MinSum}
    Let $\Delta = \Delta^n$.
	Let $L=L_1+L_2$ where $L_1,L_2 \subset \R^n$ are convex bodies. 
    Then, for all $v \in V(\Delta)$, \[E_v(L,\Delta_o(L)) = E_v(L_1,\Delta_o(L_1)) + E_v(L_2,\Delta_o(L_2)).\]
If in addition $D_e(L_1;\Delta) \leq D_e(L_2;\Delta)$ then 
    \[D_e(L_1;\Delta) \leq D_e(L;\Delta) \leq D_e(L_2;\Delta).\]

\end{claim}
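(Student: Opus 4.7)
The first identity follows from the additivity of the minimal enclosing homothet combined with the fact that supports (equivalently, barycentric coordinates) of a Minkowski sum decompose additively. The second assertion is then a mediant (weighted-average) inequality.

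\textbf{Step 1: Additivity of $\Delta_o$.} I would open by invoking equation (\ref{eqn:DeltaO}): $\Delta_o(L) = \Delta_o(L_1)+\Delta_o(L_2)$, and in particular $\lambda_{\Delta,o}(L) = \lambda_{\Delta,o}(L_1)+\lambda_{\Delta,o}(L_2)$. Label the vertex of $\Delta_o(L_i)$ corresponding to $v\in V(\Delta)$ by $v^{(i)}$, and the facet hyperplane opposite $v$ of $\Delta_o(L_i)$ by $F_v^{(i)}$. All three simplices $\Delta_o(L), \Delta_o(L_1), \Delta_o(L_2)$ share the same outward facet normals $u_v$, so $F_v = F_v^{(1)}+F_v^{(2)}$ as parallel hyperplanes, and oriented distances along $u_v$ are additive: for $p_i\in\R^n$,
\[
\dist(p_1+p_2,\,F_v) = \dist(p_1,\,F_v^{(1)}) + \dist(p_2,\,F_v^{(2)}).
\]

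\textbf{Step 2: The empty corner as a barycentric sup.} Using the remark after \Cref{lem:um-ratios}, rewrite
\[
E_v(L;\Delta_o(L)) = \lambda_{\Delta,o}(L)\cdot\bigl(1-\sup_{p\in L}p_v\bigr),
\]
where $p_v$ is the $v$-th barycentric coordinate of $p$ with respect to $\Delta_o(L)$. By \Cref{lem:um-ratios}, $p_v = \dist(p,F_v)/\dist(v,F_v)$, and $\dist(v^{(i)},F_v^{(i)}) = \lambda_{\Delta,o}(L_i)\cdot c_v$ for the constant $c_v = \dist(v^\ast,F_v^\ast)$ of the reference simplex. Combined with Step~1, this yields the additivity of \emph{scaled} barycentric coordinates:
\[
\lambda_{\Delta,o}(L)\cdot p_v \;=\; \lambda_{\Delta,o}(L_1)\cdot (p_1)_v + \lambda_{\Delta,o}(L_2)\cdot (p_2)_v,
\]
for any decomposition $p = p_1+p_2$ with $p_i\in\R^n$.

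\textbf{Step 3: Taking the supremum.} For $p\in L$, writing $p=p_1+p_2$ with $p_i\in L_i$ and taking suprema separately (legitimate because the two terms in the right-hand side above depend only on $p_1$ and $p_2$ respectively, and $L = L_1+L_2$), I get
\[
\lambda_{\Delta,o}(L)\cdot\sup_{p\in L}p_v \;=\; \lambda_{\Delta,o}(L_1)\cdot\sup_{p_1\in L_1}(p_1)_v + \lambda_{\Delta,o}(L_2)\cdot\sup_{p_2\in L_2}(p_2)_v.
\]
Subtracting from $\lambda_{\Delta,o}(L) = \lambda_{\Delta,o}(L_1)+\lambda_{\Delta,o}(L_2)$ and applying Step~2 to each term gives exactly
\[
E_v(L;\Delta_o(L)) = E_v(L_1;\Delta_o(L_1)) + E_v(L_2;\Delta_o(L_2)).
\]

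\textbf{Step 4: Mediant inequality.} Summing the previous identity over $v\in V(\Delta)$ and writing $\lambda_i=\lambda_{\Delta,o}(L_i)$, $E_i=E(L_i;\Delta_o(L_i))$, one gets
\[
D_e(L;\Delta) = \frac{E_1+E_2}{\lambda_1+\lambda_2}.
\]
This is a weighted mediant of $D_e(L_1;\Delta)=E_1/\lambda_1$ and $D_e(L_2;\Delta)=E_2/\lambda_2$, so under the hypothesis $D_e(L_1;\Delta)\leq D_e(L_2;\Delta)$ the sandwich $D_e(L_1;\Delta)\leq D_e(L;\Delta)\leq D_e(L_2;\Delta)$ is immediate.

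\textbf{Main obstacle.} There is no deep obstacle: once the key observation of Step~1 (shared normals) is made, everything reduces to the additivity of support functions packaged through barycentric coordinates. The only point where I would be careful is justifying that the supremum over $p\in L_1+L_2$ decouples into separate suprema over $L_1$ and $L_2$ — this relies on the linearity in $p$ of the scaled barycentric coordinate $p\mapsto \dist(p,F_v)$ and the product structure of $L_1+L_2$.
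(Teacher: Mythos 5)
Your proposal is correct and takes essentially the same route as the paper. The paper argues directly about the facets of the empty-corner simplex $G$ (same facets as $\Delta_o(L)$ with $\mathcal{S}(L,-u_j)$ swapped for $\mathcal{S}(L,u_j)$) and invokes additivity of support functions, while you package the identical computation through the barycentric-coordinate remark after \Cref{lem:um-ratios}; the decoupling of the supremum in your Step~3 is precisely the support-function additivity $h_{L_1+L_2}=h_{L_1}+h_{L_2}$ in disguise, and the mediant step matches the paper's word for word.
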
 
\begin{proof}
Fix a vertex $v_j \in V(\Delta)$.
A similar reasoning to the proof of \Cref{clm:SimAdd} implies the first part.
Let $u_1,\ldots,u_{n+1}$ be the normals to the facet hyperplanes of~$\Delta$.
The outer simplex $\Delta_o(L)$ is constructed using the supporting hyperplanes $\mathcal{S}(L,-u_1),\ldots,\mathcal{S}(L,-u_{n+1})$.
Let $G \in \cH_\Delta$ be the homothet of $\Delta_o(L)$ with center of homothety $v_j$
such that $\lambda_\Delta(G) = E_{v_j}(L;\Delta)$ is the size of the empty corner of $v_j$.
The simplex $G$ has exactly the same facets as $\Delta_o(L)$,
except that the facet given by $\mathcal{S}(L,-u_{j})$ in $\Delta_o(L)$ is replaced
by the facet given by $\mathcal{S}(L,u_{j})$ in $G$.
The same is true for $L_1$ and $L_2$.
The additivity of support functions now implies the first part. 

The second part follows because summing over $v$, 
\[E(L;\Delta_o(L)) = E(L_1;\Delta_o(L_1)) + E(L_2;\Delta_o(L_2))\]
which implies
\begin{align*}
D_e(L;\Delta) & = \frac{E(L;\Delta_o(L))}{\lambda_{\Delta,o}(L)} \\
& = \frac{E(L_1;\Delta_o(L_1)) + E(L_2;\Delta_o(L_2))}{\lambda_{\Delta,o}(L_1) + \lambda_{\Delta,o}(L_2)} .
\end{align*}
Finally, use the fact that if $\tfrac{a}{b} \leq \tfrac{c}{d}$
then $\tfrac{a}{b} \leq \tfrac{a+c}{b+d} \leq \tfrac{c}{d}$.
\end{proof}

\begin{proof}[Proof of \Cref{thm:De}]
\Cref{clm:MinSum} implies
the theorem for $m=2$.
The general case follows by induction.
\end{proof}

\begin{proof}[Proof of \Cref{lem:DeSym}]
Denote by $v_1,\ldots,v_{n+1}$ the vertices of $\Delta = \Delta^{n}$.
Assume (without loss of generality) that $L$ is symmetric with respect to the origin $p$.
Let $F_1,\ldots,F_{n+1}$ be the facet hyperplanes of $\Delta_o(L)$, where
$F_j  = \{x\in \R^n : \langle u_j,x\rangle = b_j\}
$ with $\ip{u_j}{v_j}>0$.

Start by fixing $j$.
Each $F_j$ is tangent to $L$ at a point $y_j$.
Denote by $F'_j$ the hyperplane
\[F'_j=\{x \in \R^n : \ip{-u_j}{x}  = b_j \}.\]
Because $L$ is symmetric, $F'_j$ is tangent to $L$ at the point
$-y_j$. 
In addition,  
\[\dist(p,F_j) = - b_j = \dist(p,F'_j)\]
and
\begin{align*}
\dist(v_j,F_j)
& = \ip{u_j}{v_j} - b_j \\
& =  \ip{u_j}{v_j} + b_j - 2 b_j \\
& = - \dist(v_j,F'_j) +  2\cdot \dist(p,F_j) .
\end{align*}
The hyperplane $F'_j$ together with the hyperplanes $\{F_k\}_{k \neq j}$ define the ``empty corner simplex of $v_j$'' so that
\begin{align}
\frac{E_{v_j}(L;\Delta_o(L))}{\lambda_{\Delta,o}(L)} = - \frac{\dist(v_j,F'_j)}{\dist(v_j,F_j)} \geq 0.
\label{eqn:EmptyCor}
\end{align}

Now, sum over $j$.
Applying \Cref{lem:um-ratios} to the origin $p$,
\[
\sum_{j} \frac{\dist(p,F_j)}{\dist(v_j,F_j)} = 1.
\]
Hence,
\begin{align*} 
D_e(L)
&= -\sum_j \frac{\dist(v_j,F'_j)}{\dist(v_j,F_j)}
\\ &= \sum_j \frac{\dist(v_j,F_j) - 2\cdot \dist(p,F_j)}{\dist(v_j,F_j)}
\\ &= n+1 - 2 \sum_j \frac{\dist(p,F_j)}{\dist(v_j,F_j)}
= n-1.
\qedhere
\end{align*}
\end{proof}

\begin{lemma}  
\label{lem:DeInf}
Let $\Delta = \Delta^n$.
Let $L \subset \R^n$ be a convex body.
Then, there is $m =  m(\Delta,L) > 0$ so that
for all $H' \in \cH_\Delta$ such that $H' \supseteq L$,
\[ \frac{E(L;H')}{\lambda_\Delta(H')} = 
D_e(L;\Delta) + m \left( 1 - \frac{\lambda_{\Delta,o}(L)}{\lambda_\Delta(H')} \right)\]
and consequently
\[ \frac{E(L;H')}{\lambda_\Delta(H')} \geq D_e(L;\Delta).\]

\end{lemma}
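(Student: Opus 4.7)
The plan is to compute $E(L;H')/\lambda_\Delta(H')$ in closed form using support functions, isolate its dependence on $\lambda_\Delta(H')$, and then read off $m$.

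First, I would derive a support-function formula for the per-vertex empty corners. Fix $v = v^{H'}_j$, the $j$-th vertex of $H'$. Any homothet $F \in \cH_\Delta$ with $v \in V(F)$ and $F \subseteq H'$ must in fact have $v$ as its own $j$-th vertex: if $v = v^F_{j'}$ with $j' \neq j$, then the vertex $v^F_j = v + \lambda_\Delta(F)(v_j - v_{j'})$ points out of the tangent cone of $H'$ at $v^{H'}_j$ (the direction $v_j - v_{j'}$ is not a nonnegative combination of $\{v_k - v_j : k \neq j\}$, as is easily seen by comparing barycentric coefficients). So $F$ is a corner of $H'$ at $v$ of the form $H' \cap \{y: \ip{u_j}{y} \geq c\}$ with $c = \ip{u_j}{v} - \lambda_\Delta(F) \cdot d_j$, where $d_j := \dist(v_j, F_j)$. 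The condition $F \cap L = \emptyset$ becomes $c \geq h_L(u_j)$, which yields
\[E_v(L;H') = \frac{h_{H'}(u_j) - h_L(u_j)}{d_j}.\]
Summing over $j$ gives $E(L;H') = \sum_j a_j h_{H'}(u_j) - S$ where $a_j := 1/d_j$ and $S := \sum_j a_j h_L(u_j)$ is independent of $H'$.

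Next, I would use \Cref{lem:um-ratios} to simplify $\sum_j a_j h_{H'}(u_j)$. The lemma says $\sum_j a_j(\ip{u_j}{p} - b_j) = 1$ for every $p \in \R^n$; since the left-hand side is affine in $p$ yet constant, this forces $\sum_j a_j u_j = 0$ and $\sum_j a_j b_j = -1$. Writing $H' = x_0 + \lambda' \Delta$ with $\lambda' := \lambda_\Delta(H')$ gives $h_{H'}(u_j) = \ip{u_j}{x_0} + \lambda'(b_j + d_j)$, so
\[\sum_j a_j h_{H'}(u_j) = \ip{\sum_j a_j u_j}{x_0} + \lambda'\sum_j a_j(b_j + d_j) = 0 + \lambda'\bigl(-1 + (n+1)\bigr) = \lambda' n.\]
Hence $E(L;H')/\lambda' = n - S/\lambda'$. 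Applying this identity with $H' = \Delta_o(L)$ gives $D_e(L;\Delta) = n - S/\lambda_{\Delta,o}(L)$, and subtracting produces
\[\frac{E(L;H')}{\lambda_\Delta(H')} = D_e(L;\Delta) + \frac{S}{\lambda_{\Delta,o}(L)}\left(1 - \frac{\lambda_{\Delta,o}(L)}{\lambda_\Delta(H')}\right),\]
i.e., the claimed identity with $m := S/\lambda_{\Delta,o}(L)$.

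Finally, I would verify $m > 0$. Since $L$ is a convex body it has an interior point $p$, and then $h_L(u_j) > \ip{u_j}{p}$ strictly for every $j$, so $S > \sum_j a_j \ip{u_j}{p} = 0$ by the identity $\sum_j a_j u_j = 0$. The inequality $E(L;H')/\lambda_\Delta(H') \geq D_e(L;\Delta)$ then follows because $H' \supseteq L$ forces $\lambda_\Delta(H') \geq \lambda_{\Delta,o}(L)$, so the factor $1 - \lambda_{\Delta,o}(L)/\lambda_\Delta(H')$ is nonnegative. The only step requiring genuine care is the support-function formula for $E_v(L;H')$ --- in particular, ruling out misaligned corner homothets whose vertex-labeling at $v$ differs from $H'$'s; everything else is bookkeeping against the barycentric identity $\sum_j a_j u_j = 0$.
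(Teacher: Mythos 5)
Your proof is correct and arrives at the same identity with the same constant $m = S/\lambda_{\Delta,o}(L)$. Both arguments rest on the same two ingredients: the per-vertex formula $E_{v_j}(L;H') = (h_{H'}(u_j)-h_L(u_j))/d_j$, and \Cref{lem:um-ratios}. Where you diverge from the paper is in how \Cref{lem:um-ratios} is deployed. The paper normalizes so that $H'$ and $\Delta_o(L)$ share a center of homothety, introduces the auxiliary negative homothet $N$ cut out by the supporting hyperplanes $G_j$, and applies \Cref{lem:um-ratios} to $N$ to identify $m$. You instead extract from \Cref{lem:um-ratios} the linear identities $\sum_j a_j u_j = 0$ and $\sum_j a_j b_j = -1$, after which the evaluation $\sum_j a_j h_{H'}(u_j) = n\,\lambda_\Delta(H')$ for every homothet $H'$ is a one-line computation; this removes the auxiliary simplex, avoids the normalization, and makes the translation-invariance of $m$ manifest. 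Your argument ruling out misaligned corner homothets is a careful justification of a formula the paper uses without comment. One small repair: to conclude $m>0$ you invoke an interior point $p$ of $L$ and assert $h_L(u_j) > \ip{u_j}{p}$ for \emph{every} $j$, which requires $L$ to be full-dimensional; the paper's convex bodies may be lower-dimensional (though not singletons). The fix is to take any $p\in L$, observe each term $h_L(u_j) - \ip{u_j}{p} \ge 0$, and note that if all vanish then $L \subseteq \bigcap_j\{x:\ip{u_j}{x}\le\ip{u_j}{p}\} = \{p\}$ (using $\sum_j a_j u_j = 0$ with $a_j>0$), contradicting $L$ being more than a point.
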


\begin{proof}
Let $H' \in \cH_\Delta$ be such that $H' \supseteq L$.
By translation if needed, we can assume without loss of generality that $H' = k \Delta_o(L)$
where $k \geq 1$.
In other words, the center of homothety between $H'$ and $\Delta_o(L)$ is the origin $p$.

Denote the vertices of $\Delta_o(L)$ by
$v_1, \ldots, v_{n+1} \in \R^n$, and denote the corresponding vertices of $H'$ by $v'_1,\ldots,v'_{n+1} \in \R^n$ so that
$v'_j = k \cdot v_j$.
For each $j$,
let $F_j = \{x\in\mathbb{R}^n:\langle u_j,x\rangle = b_j\}$ be the facet hyperplane of $\Delta_o(L)$ opposite to $v_j$.
Let $F'_j  = \{x\in\mathbb{R}^n:\langle u_j ,x\rangle = b'_j\}$ be the facet hyperplane of $H'$ opposite to $v'_j$ such that $b'_j = k \cdot b_j$. Let $G_j = \{x\in\mathbb{R}^n:\langle  u_j ,x\rangle = c_j\}$ be the supporting hyperplane to $L$ in the direction $u_j$.
It follows that
\begin{align*}
\dist(v'_j, F'_j)
& = \ip{u_j}{v'_j} - b'_j \\
& = k \cdot (\ip{u_j}{v_j} - b_j) \\
& = k \cdot \dist(v_j, F_j) 
\end{align*}
and
\begin{align*}
\dist(p,G_j) - \dist(v'_j,G_j)
& = c_j - \ip{u_j}{v'_j} + c_j \\
& = c_j - \ip{u_j}{k \cdot v_j} + c_j \\
& = k \cdot (\dist(p,G_j) - \dist(v_j,G_j)).
\end{align*}
Similarly to \Cref{eqn:EmptyCor},
\begin{align*}
\frac{E(L;H')}{\lambda_\Delta(H')}
& = \sum_j \frac{\dist(v'_j,G_j)}{\dist(v'_j,F'_j)} 
\\ 
& = \sum_j \frac{\dist(p,G_j)
-  k \cdot (\dist(p,G_j) - \dist(v_j,G_j))}{k \cdot \dist(v_j, F_j)}  \\ 
& = \left( \sum_j \frac{\dist(v_j,G_j)}{ \dist(v_j, F_j)} \right)
 + \left( \frac{1-k}{k} \sum_j \frac{\dist(p,G_j)
}{\dist(v_j, F_j)} \right)
\\ 
& = D_e(L)
 +  \frac{1-k}{k} \sum_j \frac{\dist(p,G_j)
}{\dist(v_j, F_j)}  .
\end{align*}

The hyperplanes $F_1,\ldots,F_{n+1}$ define
$n+1$ halfspaces such that their intersection is $\Delta_o(L)$.
Similarly, the hyperplanes $G_1,\ldots,G_{n+1}$ define halfspaces such that their intersection is a simplex $N$.
The simplex $N$ is of the form $x - m H$ where $m  = m(L) > 0$ (it is a ``negative homothet'' of $\Delta$).
Denote its vertices by $w_1,\ldots,w_{n+1}$
where $w_j$ corresponds to $v_j$.
It follows that for all $j$,
\[\dist(w_j, G_j) = - m \cdot \dist(v_j,F_j).\]
Apply \Cref{lem:um-ratios} to the origin $p$ and the simplex $N$,
\begin{align*}
\sum_j \frac{\dist(p,G_j)}{\dist(v_j,F_j)} =
- m \cdot \sum_j \frac{\dist(p,G_j)}{\dist(w_j,G_j)} = - m < 0.
\end{align*}
Finally, 
\begin{equation}
\label{eq:f-increasing}
\frac{E(L;H')}{\lambda_\Delta(H')} = 
D_e(L;\Delta) + m \left( 1 - \frac{\lambda_{\Delta,o}(L)}{\lambda_\Delta(H')} \right).
\qedhere    
\end{equation}

\end{proof}

\begin{proof}[Proof of \Cref{thm:DeUnion}]
\label{proof:thm:DeUnion}
Let $H = \Delta_o(L)$.
For every vertex $v$ of $H$,
\begin{align*}
E_{v}(L;H) 
& = \min \{ E_{v}(L_1;H), E_{v}(L_2;H)\} \\
&\geq  E_{v}(L_1,H) + E_{v}(L_2;H) - \lambda_{\Delta}(H) .
\end{align*}
Thus, summing over $v$ and using \Cref{lem:DeInf},
\begin{align*} 
D_e(L) &
\geq -(n+1) + \frac{E(L_1;H)}{\lambda_{\Delta}(H)} + \frac{E(L_2;H)}{\lambda_{\Delta}(H)} \\
& \geq -(n+1) + D_e(L_1) + D_e(L_2). \qedhere
\end{align*}
\end{proof}

\begin{proof}[Proof of \Cref{thm:Simplex}]
The proof of the $\leq$ side is by construction.
In depth $d$, we can trivially build $\Delta^{2^d}$ which sits inside $\Delta^n$.
For every $v \in V(\Delta^{2^d})$, 
the size of the empty corner of $v$ is zero.
For every other vertex of $\Delta^n$, the size is one.
It follows that
\[D_e(\cP_{n,d}) \leq n+1- 2^d.\]
The proof of the $\geq$ side is by induction. 
The induction step is $d=1$.
Every $P \in \cP_{n,1}$ is a zonotope and hence centrally symmetric.
The claim follows by \Cref{lem:DeSym}:
\[D_e(\cP_{n,1}) = n-1 = n+1 - 2^d .\]
The induction step follows from \Cref{thm:De} and \Cref{thm:DeUnion}.
Assume $P = \sum_j (K_j*L_j) \not \in \cP_{n,0}$ where $K_j,L_j \in \cP_{n,d-1}$ with $d>1$.
We can assume that $K_j *L_j \not \in \cP_{n,0}$ for all $j$. 
Hence, 
\begin{align*}
D_e(P) 
& = n+1 - \coD_e(P) \\
& \geq n+1 - \text{max}_j \coD_e(K_j*L_j) \\
& \geq n+1 - 2^{d-1}-2^{d-1} ;
\end{align*}
the first inequality follows from \Cref{thm:De} 
and the second from \Cref{thm:DeUnion}.
To apply \Cref{thm:DeUnion}, we need
both $K_j,L_j$ to be convex bodies;
if e.g.\ $K_j \in \cP_{n,0}$ then we can replace $K_j$ by $K_j *\{x\}$ for $x \in L_j \setminus K_j$ and still apply induction.
\end{proof}

\section{Relating the two distances}
\label{sec:relateD}
The goal of this section is to relate the empty corner distance and the in-out distance
(Lemmas~\ref{lem:IOtoE} and~\ref{lem:IOe-third}).

\subsection{Part one}
\label{sec:IOtoE}

The following claim is an extension of \Cref{lem:IOtoE}.

\begin{claim}
\label{clm:IOe}
Let $\Delta = \Delta^n$. Let $M \subset \R^n$ be a convex body.
Then,
\[
D_e(M) \leq n \left(1 - \frac{\lambda_{\Delta,i}(M)}{\lambda_{\Delta,o}(M)}\right).
\]
\end{claim}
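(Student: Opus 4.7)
The plan is to exploit the remark after \Cref{lem:um-ratios}, which converts empty corner sizes into infima of barycentric coordinates, together with the observation that the inner homothet $H_i \subseteq M$ forces each such infimum to be bounded by a single explicit linear expression in $\lambda := \lambda_{\Delta,i}(M)/\lambda_{\Delta,o}(M)$. After rescaling I may assume $\lambda_{\Delta,o}(M) = 1$, so that $\Delta_o(M)$ is a translate of $\Delta$ with vertices $v_1,\ldots,v_{n+1}$, and there is a homothet $H_i \in \cH_\Delta$ with $\lambda_\Delta(H_i) = \lambda$ and $H_i \subseteq M \subseteq \Delta_o(M)$.

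Next I would describe $H_i$ via the homothety between the two simplices: write $H_i = c + \lambda(\Delta_o(M) - c)$ for some center $c \in \R^n$, so that the vertex $w_j$ of $H_i$ corresponding to $v_j$ equals $w_j = \lambda v_j + (1-\lambda) c$. Expressing $c$ in barycentric coordinates $c = \sum_k c_k v_k$ (with $\sum_k c_k = 1$), the inclusion $w_j \in \Delta_o(M)$ for every $j$ forces $c_k \geq 0$ for all $k$, so the $c_k$ are genuine barycentric weights. Then the $v_j$-coordinate of $w_j$ is $\alpha_{j,j} = (1-\lambda) c_j + \lambda$.

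Now I would apply the remark: since $w_j \in H_i \subseteq M$, the supremum $\sup\{p_{v_j} : p \in M\}$ is at least $\alpha_{j,j}$, hence
\[E_{v_j}(M;\Delta_o(M)) \leq 1 - \alpha_{j,j}.\]
Summing over $j$ gives $D_e(M) \leq (n+1) - \sum_j \alpha_{j,j}$, and the trace computation
\[\sum_j \alpha_{j,j} = (1-\lambda)\sum_j c_j + \lambda(n+1) = 1 + n\lambda\]
collapses the bound to $D_e(M) \leq n(1-\lambda)$, which is exactly the desired inequality after unwinding the normalization.

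The only potentially subtle point is verifying that the homothety center $c$ lies in $\Delta_o(M)$, i.e.\ that its barycentric coordinates are non-negative; but this is forced by the fact that every vertex $w_j$ of $H_i$ must lie in $\Delta_o(M)$, since for $k \neq j$ the $v_k$-coordinate of $w_j$ is simply $(1-\lambda)c_k$. Beyond that, the argument is a direct identification of $E_{v_j}$ with a barycentric deficit plus one line of arithmetic, so I do not expect any serious obstacle.
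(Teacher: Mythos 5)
Your proof is correct, and it takes a genuinely more elementary route than the paper's. The paper bounds $D_e(M)$ by $E(\Delta_i(M);\Delta_o(M))/\lambda_{\Delta,o}(M)$ and then invokes the general formula of \Cref{lem:DeInf} (the identity \eqref{eq:f-increasing}), identifying the parameter $m$ there with $n$ via the negative homothet of the inscribed simplex; this is efficient because \Cref{lem:DeInf} is also needed for \Cref{thm:DeUnion}, so the work is amortized. Your version bypasses \Cref{lem:DeInf} entirely and works directly with barycentric coordinates: you parametrize $H_i$ by its homothety center $c$, note that $w_j \in \Delta_o(M)$ forces $(1-\lambda)c_k \geq 0$ for $k\neq j$ and hence $c_k \geq 0$ for all $k$ (with the degenerate case $\lambda=1$ being trivial), read off the $v_j$-coordinate $\alpha_{j,j}=(1-\lambda)c_j+\lambda$ of $w_j$, and then the remark after \Cref{lem:um-ratios} converts $w_j\in M$ into $E_{v_j}(M;\Delta_o(M))\leq 1-\alpha_{j,j}$. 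Summing and using $\sum_j c_j = 1$ gives exactly $n(1-\lambda)$. The only thing to flag is a small mismatch in how the remark is stated in the paper (it writes $E_v(L;\Delta)$ with barycentric coordinates of $\Delta$, implicitly assuming $\Delta_o(L)=\Delta$), but your normalization step — translating so that $\Delta_o(M)=\Delta$ and $\lambda_{\Delta,o}(M)=1$, which is harmless since both sides of the claim are translation- and scale-invariant — handles this cleanly. In short: same key idea (the vertices of the inscribed homothet cap the empty corners), but a self-contained single calculation instead of a call to the more general \Cref{lem:DeInf}.
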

\begin{proof}
Let 
$L \in \cH_\Delta$ be the simplex such that $L \subseteq M$ and $\lambda_\Delta(L) = \lambda_{\Delta,i}(M)$.
Because $L \subseteq M$,
\[D_e(M) \leq \frac{E(L; \Delta_o(M))}{\lambda_{\Delta,o}(M)}.\]
It suffices to prove that
\[ \frac{E(L; \Delta_o(M))}{\lambda_{\Delta,o}(M)} = n \left(1 - \frac{\lambda_{\Delta,i}(M)}{\lambda_{\Delta,o}(M)} \right).\]
For this, we use \Cref{eq:f-increasing}.
As the simplex $H'$ from \Cref{lem:DeInf} we take $\Delta_o(M)$:
\[
\frac{E(L; \Delta_o(M))}{\lambda_{\Delta,o}(M)}
= D_e(L;\Delta)
+  \frac{k-1}{k} m = \frac{k-1}{k} m,
\]
where \[k = \frac{\lambda_{\Delta,o}(M)}{\lambda_{\Delta,i}(M)}\] 
and $(-m)$ is the negative coefficient of homothety between the simplex $L$ and its negative homothet $N$ in which $L$ is inscribed.
It follows that $m=n$ so that
\[
\frac{k-1}{k} m
= n \left (1 - \frac{\lambda_{\Delta,i}(M)}{\lambda_{\Delta,o}(M)} \right).
\qedhere 
\]
\end{proof}

\begin{remark*} 
Equality is achieved in \Cref{clm:IOe} when the inner simplex $L$ ``touches all the empty corners'' of $M$. It happens in many cases, including $M=\Delta$ and $M=-\Delta$.
\end{remark*}

\subsection{Part two}
\label{sec:IOtoE-two}
The following claim is an extension of \Cref{lem:IOe-third}.

\begin{claim}
\label{clm:IOe-second}
For every convex body $L \subset \R^n$,
\[
D_e(L) +  \frac{\lambda_{\Delta,i}(L)}{\lambda_{\Delta,o}(L)} \geq 1.
\]
\end{claim}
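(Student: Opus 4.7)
My plan is to normalize using the scale-invariance of the inequality, reformulate in barycentric coordinates with respect to $\Delta_o(L)$, and then invoke the Kadets-type theorem of Akopyan and Karasev~\cite{akopyan2012kadets} as a black box.

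First, since both $D_e(L)$ and the ratio $\lambda_{\Delta,i}(L)/\lambda_{\Delta,o}(L)$ are invariant under positive scaling of $L$, I would rescale and translate $L$ so that $\lambda_{\Delta,o}(L)=1$ and $\Delta_o(L)=\Delta=\Delta^n$. Writing $E_j=E_{v_j}(L;\Delta)$, the inequality to prove becomes
\[
\lambda_{\Delta,i}(L) + \sum_{j=1}^{n+1} E_j \ \geq\ 1.
\]
Using the barycentric description from the remark after \Cref{lem:um-ratios}, $E_j=1-M_j$ where $M_j=\sup_{p\in L}p_j$, so the target bound is equivalent to $\lambda_{\Delta,i}(L)\geq \sum_j M_j - n$. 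In barycentric terms, an element of $\cH_\Delta$ is of the form $H=\{p:p_k\geq\gamma_k\ \forall k\}$ with $\gamma_k\geq 0$ and $\sum_k\gamma_k=1-\lambda_\Delta(H)$; the $j$-th vertex of $H$ has $k$-th coordinate $\gamma_k$ for $k\ne j$ and $\gamma_j+\lambda_\Delta(H)$ for $k=j$. By convexity, $H\subseteq L$ iff all $n+1$ vertices lie in $L$, so the claim asks for such an $H$ of size at least $1-\sum E_j$.

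Second, I would invoke the Kadets-type theorem of Akopyan and Karasev~\cite{akopyan2012kadets}. To apply it, I would partition $\Delta$ into $L$ together with the closures of the connected components $R_1,\ldots,R_{n+1}$ of $\Delta\setminus L$ (there is one such component per vertex, since $L$ touches every facet of $\Delta$, so $\Delta\setminus L$ has exactly $n+1$ components, $R_j$ being the one containing $v_j$). Their theorem yields
\[
\lambda_{\Delta,i}(L) + \sum_{j=1}^{n+1}\lambda_{\Delta,i}(R_j)\ \geq\ 1,
\]
and the desired inequality follows provided $\lambda_{\Delta,i}(R_j)\leq E_j$ for each $j$.

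The main obstacle is establishing the inequality $\lambda_{\Delta,i}(R_j)\leq E_j$, which says that the largest $\Delta$-homothet inside the corner region $R_j$ is attained at a homothet with $v_j$ as a vertex (so is no larger than the empty corner $C_j$). Since $L$ is convex and $R_j$ is the connected component of $\Delta\setminus L$ containing $v_j$, the region $R_j$ is star-shaped with respect to $v_j$: for any $x\in R_j$, the segment $[v_j,x]$ lies in $R_j$. The plan is to exploit this by a sliding/scaling argument: given any $\Delta$-homothet $H'\subseteq R_j$, produce a $\Delta$-homothet $H''\subseteq R_j$ containing $H'$ and having $v_j$ as a vertex, whence $\lambda_\Delta(H')\leq\lambda_\Delta(H'')\leq E_j$ by maximality of the empty corner. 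A further technicality is that $R_j$ is not convex in general (its boundary contains an arc of $\partial L$ curving inward), so Akopyan--Karasev must be applied in the form that allows non-convex pieces of vanishing $\Delta$-inradius contribution, or alternatively one replaces $R_j$ by a convex superset not enlarging the $\Delta$-inradius; I expect this verification to be the most delicate step.
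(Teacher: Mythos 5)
Your plan to invoke the Akopyan--Karasev theorem is the right instinct, and your normalization and barycentric reformulation are fine, but the decomposition you propose breaks down in several places and is genuinely different from (and less robust than) the one the paper uses.

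First, the claim that $\Delta\setminus L$ has exactly $n+1$ connected components is false for $n\geq 3$. Take $L$ to be the insphere of a tetrahedron ($n=3$): it touches all four facets, yet $\Delta\setminus L$ is \emph{connected} --- one can travel from one vertex to another along a neighbourhood of the $1$-skeleton. So the regions $R_j$ you want simply do not exist as separate components, and the whole attribution of corner regions to vertices collapses. Second, even in the cases where the $R_j$ do exist, you correctly flag that they are not convex; \Cref{thm:inductive-kadets} requires closed convex sets, and it is not clear how to replace an $R_j$ by a convex superset without potentially increasing its $\Delta$-inradius. Third, you would still need to verify the covering is \emph{inductive} (the hypothesis of \Cref{thm:inductive-kadets} is not satisfied by an arbitrary covering); your plan does not address this, and for non-convex pieces the notion does not even apply. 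Finally, \Cref{thm:inductive-kadets} is stated for a covering of $\R^n$, not of the body $B$, so one needs to produce convex sets covering all of $\R^n$ whose traces on $\Delta_o$ behave correctly.

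The paper's proof sidesteps all of these problems by building the covering from the \emph{inscribed} simplex $\Delta_i(L)$, not from $L$ directly. It selects (via Carath\'eodory applied to the set of contact normals, as in \Cref{lem:inscribed}) at most $n+1$ supporting half-spaces $T_1,\ldots,T_k$ of $L$ at points of $\partial L\cap\partial\Delta_i$ whose normals positively span the origin. The pieces are then $A'_0=\bigcap_j T_j$ and $A'_j=cl(\R^n\setminus T_j)$: these are automatically closed and convex, they cover $\R^n$, and the paper proves they form an inductive covering (\Cref{clm:A'isInd}). The remaining geometric work is to show $\lambda_{\Delta,i}(A'_j\cap\Delta_o)\leq E_{g'_j}(L;\Delta)$ where $g'_j$ is the vertex of $\Delta_o$ corresponding to the contact vertex of $\Delta_i$, and that these vertices $g'_j$ are distinct (ensured by first perturbing $L$ to be smooth and strictly convex). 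You would need these ideas --- in particular, working with $\Delta_i(L)$ rather than with the components of $\Delta\setminus L$ --- to make the Akopyan--Karasev application go through.
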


Before proving the claim, we need some preparation.
A supporting half-space $T$ of a convex body $L \subseteq \R^n$ is of the form
\[ T= \{x \in \R^n : \ip{u}{x} \leq h_L(u) \}\]
for some $u \in \S^{n-1}$.
It always holds that $L \subseteq T$.

\begin{lemma}
\label{lem:inscribed}
Let $L\subset\mathbb{R}^n$ be a convex body
with a non-empty interior.
Then, there exist $k \leq n+1$ supporting half-spaces $T_1,\ldots,T_k$ of $L$ such that
\[
L \subseteq A := \bigcap_{j \in [k]} T_j
\quad 
\text{and}
\quad 
\lambda_{\Delta,i}(L) = \lambda_{\Delta,i}(A).
\]
\end{lemma}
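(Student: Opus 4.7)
The plan is to formulate the search for a maximal inscribed homothet of $\Delta$ in $L$ as a convex program in $n+1$ variables and then apply a Carathéodory-style reduction to its optimality conditions. Parameterize homothets of $\Delta$ by $(y,\lambda) \in \R^n \times \R_{\geq 0}$ via $H = y + \lambda\Delta$; via support functions, the containment $H \subseteq L$ is equivalent to the infinite family of linear inequalities
\[
\ip{u}{y} + \lambda\, h_\Delta(u) \leq h_L(u) \qquad (u \in \S^{n-1}),
\]
one per supporting half-space of $L$. Since $L$ has non-empty interior, maximizing $\lambda$ has an optimal solution $(y^*,\lambda^*)$ with $\lambda^* = \lambda_{\Delta,i}(L) > 0$.

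Next I would invoke the normal-cone optimality condition for this linear program: the objective gradient $(0,\ldots,0,1) \in \R^{n+1}$ lies in the conic hull of the active constraint gradients $\{(u, h_\Delta(u)) : u \text{ active}\}$, where ``active'' means equality is attained at $(y^*,\lambda^*)$. Applying Carathéodory's theorem in $\R^{n+1}$ extracts active directions $u_1,\ldots,u_k$ with $k \leq n+1$ and coefficients $\alpha_j > 0$ satisfying
\[
\sum_{j=1}^k \alpha_j u_j = 0 \qquad \text{and} \qquad \sum_{j=1}^k \alpha_j h_\Delta(u_j) = 1.
\]

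Setting $T_j := \{x \in \R^n : \ip{u_j}{x} \leq h_L(u_j)\}$ (a supporting half-space of $L$) and $A := \bigcap_{j=1}^k T_j$, the inclusion $L \subseteq A$ is immediate, giving $\lambda_{\Delta,i}(L) \leq \lambda_{\Delta,i}(A)$. For the reverse inequality, suppose $z + \mu\Delta \subseteq A$; then $\ip{u_j}{z} + \mu h_\Delta(u_j) \leq h_L(u_j)$ for each $j$. Taking the $\alpha_j$-weighted sum and applying the two Carathéodory identities yields $\mu \leq \sum_j \alpha_j h_L(u_j)$. On the other hand, activity at $(y^*,\lambda^*)$ gives $\ip{u_j}{y^*} + \lambda^* h_\Delta(u_j) = h_L(u_j)$; the same weighted sum evaluates the right-hand side to $\lambda^*$. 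Hence $\mu \leq \lambda^*$, so $\lambda_{\Delta,i}(A) = \lambda^*$.

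The main technical point is justifying the normal-cone characterization when constraints are indexed by the compact set $\S^{n-1}$; this is standard convex analysis, but one can bypass it by a direct separation argument. If no such Carathéodory witness $(u_j, \alpha_j)$ existed, then by separating $(0,1)$ from the (closed) conic hull of active gradients in $\R^{n+1}$, one would find $(\xi,\eta) \in \R^{n+1}$ with $\eta > 0$ and $\ip{u}{\xi} + \eta h_\Delta(u) \leq 0$ for every active $u$. Moving $(y^*,\lambda^*)$ a small amount along $(\xi,\eta)$ then preserves feasibility while strictly increasing $\lambda$, contradicting optimality.
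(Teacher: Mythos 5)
Your proposal is essentially the paper's proof reformulated in the language of semi-infinite linear programming. The paper works directly with the optimal inscribed simplex $\Delta_i = \Delta_i(L)$, sets $U = \{u : h_L(u) = h_{\Delta_i}(u)\}$, proves $0 \in \conv(U)$ by separation and extracts $u_1,\ldots,u_k$ with $\sum_j \alpha_j u_j = 0$ by Carath\'eodory in $\R^n$. Your normal-cone condition $(0,\ldots,0,1) \in \operatorname{cone}\{(u,h_\Delta(u)) : u \in U\}$ is equivalent to $0 \in \conv(U)$ (given that $\Delta^n$ contains the origin in its interior, so $h_\Delta > 0$ on $\S^{n-1}$), and your Carath\'eodory application in $\R^{n+1}$ produces the same $k \le n+1$. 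The one place you genuinely improve on the paper is the last step: carrying the second identity $\sum_j \alpha_j h_\Delta(u_j) = 1$ lets you evaluate $\lambda_{\Delta,i}(A)$ by a weighted sum in two lines, whereas the paper invokes strict subadditivity of $h_{\Delta_i}$ at $0$, which takes a moment more to justify cleanly. On the other side, your bypass separation argument is slightly looser than the paper's: when you say ``moving $(y^*,\lambda^*)$ a small amount along $(\xi,\eta)$ preserves feasibility,'' you need a uniform positive gap $h_L(u) - h_{\Delta_i}(u) \ge \eps'$ on the compact set of inactive directions (the paper constructs this explicitly from compactness of $\S^{n-1}\setminus W$); you should also make explicit that the conic hull of $\{(u,h_\Delta(u)) : u\in U\}$ is closed, which holds because $U$ is compact and $h_\Delta$ is uniformly positive, so the generator set is compact and bounded away from the origin. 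With those two details filled in, your proof is correct.
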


The proof of the lemma uses techniques from \cite{klain2010containment}.
In the proof, we use that for every two convex bodies $K,L \subset \R^n$,
\[
K \subseteq L
\quad\Longleftrightarrow\quad
h_K(u) \leq h_L(u) 
\quad\forall u\in \S^{n-1}.
\]
For a convex body $L \subset \R^n$,
we denote by $\Delta_i(L)$
a simplex $H \in \cH_{\Delta^n}$ such that
 $H \subseteq L$ and $\lambda_{\Delta^n}(H) = \lambda_{\Delta^n,i}(L)$
 (it is not necessarily unique).
 If $L$ has non-empty interior,
 then $\Delta_i(L)$ has non-empty interior.

\begin{proof}[Proof of \Cref{lem:inscribed}]
Let $\Delta_i = \Delta_i(L)$.
Because $\Delta_i \subseteq L$,
\[
h_L(u) \geq h_{\Delta_i}(u) 
\quad\forall u\in \S^{n-1}.
\]
Let
\[
U = \bigl\{ u\in \S^{n-1} : h_L(u) = h_{\Delta_i}(u)  \bigr\}.
\]

We claim that $0\in \conv(U)$. Otherwise, because $U$ is closed, there exist $\eps>0$ and $v\in \S^{n-1}$ such that
$\langle v,u\rangle < - \eps$ for all $u\in U$.  
In other words, $U$ is contained in the (open) half-space $W:=\{w \in \R^n : \ip{v}{w} < -\eps\}$.
The set $\S^{n-1} \setminus W$ is compact and is disjoint from $U$ so there is $\eps' > 0$ such that
\begin{align*}
h_L(u) \geq h_{\Delta_i}(u) + \eps'
\quad\forall u\in \S^{n-1} \setminus W.
\end{align*}
Hence, there is $\delta>0$ such that
\begin{align*}
h_L(u) > h_{\Delta_i}(u) + \delta\langle v,u\rangle = h_{\Delta_i + \delta v}(u)
\quad\forall u\in \S^{n-1}.
\end{align*}
It follows that $\Delta_i + \delta v$ is contained in the interior of $L$, and therefore $L$ contains a larger homothet of~$\Delta$, which is a contradiction. 

Now, by Carath\'eodory's  theorem, there exist $u_1,\dots,u_k \in U$
and $\alpha_1,\ldots,\alpha_k > 0$ for
$2 \leq k\leq n+1$ such that 
\[
\sum_{j} \alpha_j u_j = 0.
\]
Define the supporting half-spaces as ($j \in [k]$)
\[
T_j
=\{ x : \langle x,u_j\rangle \leq h_L(u_j)\}
\]
and their intersection
\[
A=\bigcap_j T_j \supseteq L.
\]

It remains to prove that
$\lambda_{\Delta,i}(L) = \lambda_{\Delta,i}(A)$.
Because $L \subseteq A$,
we have $\lambda_{\Delta,i}(L) \leq \lambda_{\Delta,i}(A)$.
Assume towards a contradiction
that
\[(1+\gamma) \lambda_{\Delta,i}(L) \leq  \lambda_{\Delta,i}(A)\]
for some $\gamma > 0$ so that 
\[(1+\gamma) \Delta_i + t \subseteq A\]
for some $t \in \R^n$.
For every $j \in [k]$, 
\[
(1+\gamma) h_{\Delta_i}(u_j) + \ip{t}{u_j}
= h_{(1+\gamma) \Delta_i+t}(u_j)
\leq h_A(u_j)
= h_L(u_j) = h_{\Delta_i}(u_j)
\]
so 
\[
\gamma h_{\Delta_i}(u_j) + \ip{t}{u_j} \leq 0 .
\]
Because $h_{\Delta_i}$ is strictly convex 
at the origin, we get a contradiction:
\[
0 = \sum_j \alpha_j \ip{t}{u_j}
\leq -\gamma \sum_j \alpha_j h_{\Delta_i}(u_j)
< -\gamma h_{\Delta_i}\left( \sum_j \alpha_j u_j \right) = 0. \qedhere
\]
\end{proof}

We also use the following Kadets-type result by Akopyan and Karasev~\cite{akopyan2012kadets}.
It is based on the notion of inductive covering (which is defined by induction as follows).
The cover of $\R^n$ by a single set $V_1 = \R^n$ is inductive.
A cover $\R^d = V_1\cup\cdots\cup V_k$ is inductive if $V_1,\ldots,V_k$ are closed convex sets and for each $j \in [k]$, there exists an inductive covering
	\[
	\R^d = W_1\cup\cdots\cup W_{i-1}\cup W_{i+1}\cup\cdots\cup W_k 
	\]
	such that
\[W_\ell \subseteq  V_\ell \cup V_j
\qquad  \forall \ell \neq j.\]
The following\footnote{In \cite{akopyan2012kadets} the theorem is stated for partitions instead of coverings. The remark before Definition~5 states that the theorem holds for coverings as well. In addition, their notation is different from ours: a convex body means with a non-empty interior,
and $\lambda_{B,i}(C)$ is denoted by $r_B(C)$.}
 is Theorem 1 in \cite{akopyan2012kadets}.

\begin{theorem}
\label{thm:inductive-kadets}
Let $V_1,\ldots,V_k$ be an inductive covering of $\R^n$.
	Let $B\subset\mathbb{R}^n$ be a convex body with a non-empty interior, and let $C_j = V_j\cap B$ for each~$j \in [k]$. Then,
	\[
	\sum_j \lambda_{B,i}(C_j) \ge 1.
	\]
\end{theorem}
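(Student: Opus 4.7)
The plan is strong induction on $k$, the number of sets in the cover. The base case $k=1$ is immediate: the definition of inductive cover forces $V_1 = \R^n$, so $C_1 = B$ and $\lambda_{B,i}(C_1)=\lambda_{B,i}(B)=1$. For the inductive step, I would exploit the recursive structure baked into the definition of an inductive cover: for each $j \in [k]$ there is a $(k-1)$-element inductive cover $\{W_\ell^{(j)}\}_{\ell \neq j}$ of $\R^n$ with $W_\ell^{(j)} \subseteq V_\ell \cup V_j$. The inductive hypothesis applied to this refined cover gives
\[
\sum_{\ell \neq j} \lambda_{B,i}(W_\ell^{(j)} \cap B) \geq 1,
\]
and I then need to transport this bound to one for the $C_j = V_j \cap B$.

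The geometric engine of the argument is a splitting lemma for $2$-set covers: if a convex body $D$ is contained in $V \cup V'$ with $V, V'$ closed convex, then $\lambda_{B,i}(V \cap D) + \lambda_{B,i}(V' \cap D) \geq \lambda_{B,i}(D)$. To prove this, take an inscribed $B$-homothet $H \subseteq D$ of coefficient $r = \lambda_{B,i}(D)$. If $H$ lies entirely in one of $V, V'$ we are done; otherwise $V \cap H$ and $V' \cap H$ are two proper closed convex subsets of $H$ whose union is $H$, forcing them to be separated by a hyperplane $\pi$ within $H$. On each side of $\pi$ one can exhibit an inscribed $B$-homothet of the explicit form $sB + (1 - s/r) b_0$ with $b_0$ chosen as an extremal point of $H$ in the normal direction to $\pi$. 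A direct computation (analogous to the one in the proof of \Cref{clm:IOe}) then yields coefficients $s_1, s_2$ satisfying $s_1 + s_2 = r + (\text{width of } V \cap V' \cap H) / (h_B(u) + h_B(-u)) \geq r$, where $u$ is the unit normal to $\pi$.

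Applying the splitting lemma to each $W_\ell^{(j)} \cap B \subseteq (V_\ell \cap B) \cup (V_j \cap B)$ produces, for each $\ell \neq j$, inscribed $B$-homothets with coefficients $a_\ell \leq \lambda_{B,i}(V_\ell \cap B)$ and $b_\ell \leq \lambda_{B,i}(V_j \cap B)$ with $a_\ell + b_\ell \geq \lambda_{B,i}(W_\ell^{(j)} \cap B)$. The main obstacle I foresee is the combinatorial closing of the induction: a naive summation produces $\sum_{\ell \neq j} b_\ell \leq (k-1)\lambda_{B,i}(V_j \cap B)$, which overcounts the $V_j$-contribution by a factor of up to $k-1$ and is therefore too weak for $k \geq 3$. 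Closing the induction cleanly will require using the full force of the inductive-covering definition — namely that such refinements exist for every $j \in [k]$, not just a single one — together with the freedom in how each individual split $(a_\ell, b_\ell)$ is chosen. The natural strategies are either to average the resulting inequalities over $j$ or to recurse along the tree structure implicit in the nested refinements so that each $V_j$ is charged at most its own inradius; pinning down the precise combinatorial identity that avoids the overcount is the delicate heart of the argument.
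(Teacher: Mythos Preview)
First, note that the paper does not prove \Cref{thm:inductive-kadets}: it is quoted as Theorem~1 of Akopyan--Karasev~\cite{akopyan2012kadets} and invoked as a black box in the proof of \Cref{clm:IOe-second}. So there is no in-paper argument to compare against; you are in effect proposing to reprove a cited result.

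Your outline has two genuine gaps. The first is in the splitting lemma: it is \emph{not} true that two closed convex sets $V,V'$ covering a convex body $H$ are ``separated by a hyperplane $\pi$ within $H$'', nor even that some hyperplane $\pi$ satisfies $H\cap\pi^{+}\subseteq V$ and $H\cap\pi^{-}\subseteq V'$. A counterexample is $H=[0,4]^2$, $V=\{|x-y|\le 3\}$, $V'=\{|x+y-4|\le 3\}$: here $H\setminus V$ is the pair of corner triangles at $(0,4)$ and $(4,0)$, while $H\setminus V'$ is the pair at $(0,0)$ and $(4,4)$, and no halfplane separates one pair from the other (both convex hulls contain the centre $(2,2)$ in their interior). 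The splitting \emph{inequality} happens to hold in this example, but your proposed mechanism for it fails. The second gap is the one you flag yourself, and it is not cosmetic. Your argument yields, for each fixed $j$, only
\[
\sum_{\ell\in[k]} \lambda_{B,i}(C_\ell)\;+\;(k-2)\,\lambda_{B,i}(C_j)\ \ge\ 1,
\]
and averaging over $j$ gives merely $\sum_\ell \lambda_{B,i}(C_\ell)\ge k/(2k-2)\to\tfrac12$; choosing the extremal $j$ does no better. The overcount is structural---each of the $k-1$ splits donates an independent copy of $\lambda_{B,i}(C_j)$---and a vague ``tree recursion'' does not repair it, since the same overcount recurs at every internal node. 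Closing the induction is precisely the content of the Akopyan--Karasev paper, and needs their actual argument rather than a gesture toward one.
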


\begin{proof}[Proof of \Cref{clm:IOe-second}]
We assume without loss of generality that $L$ is 
smooth, strictly convex and has a non-empty interior, 
because the collection of such convex bodies is dense. 
We also assume without loss of generality that $\lambda_{\Delta,o}(L) = 1$ and prove that $D_e(L) + \lambda_{\Delta,i}(L) \geq 1.$

Let $\Delta = \Delta^n$. 
Let $\Delta_o = \Delta_o(L)$ and $\Delta_i = \Delta_i(L)$.
Consider the set
\[
U = \bigl\{ u\in \S^{n-1} : h_L(u) = h_{\Delta_i}(u)  \bigr\}
\] from the proof of \Cref{lem:inscribed}.
Because $L$ is strictly convex and $\Delta_i \subseteq L$, for every $u \in U$, the face $\Delta_i \cap \mathcal{S}(\Delta_i,u)$ of $\Delta_i$ is a vertex which we denote by $g(u) \in V(\Delta_i)$ (otherwise, the boundary of $L$ contains a line segment).
The vector $u$ is a normal to $L$ at $g(u)$. Because $L$ is strictly convex and smooth, the map $u \mapsto g(u)$ is injective. 
Choose the directions $u_j$ and the half-spaces $T_j$ (for $j \in [k]$) from the proof of \Cref{lem:inscribed}. 
Let 
\[A'_0 = \bigcap_{j} T_j\]
and 
\[A'_j = cl( \R^n \setminus T_j)\]
where $cl$ denotes closure. 
Let $A_j = A'_j \cap \Delta_o$ for $j \in \{0,1,\ldots,k\}$.
It follows that 
\begin{align*}
\R^n &= \bigcup_j A'_j
\intertext{and}
\Delta_o &= \bigcup_j A_j.
\end{align*}
By \Cref{lem:inscribed},
\[\lambda_{\Delta,i}(L) = \lambda_{\Delta,i}(A_0).\]

Fix $j \in [k]$ for now. The direction $u_j \in U$ defines the vertex $g_j := g(u_j)$ of $\Delta_i$.
Denote by $g'_j$ the vertex of $\Delta_o$
that corresponds to $g_j$: 
\[\{g'_j\} =  \mathcal{S}(\Delta_o,u_j) \cap \Delta_o.\]
We claim that 
\[\lambda_{\Delta,i}(A_j) \leq E_{g'_j}(L;\Delta).\]
Let $x \in \mathcal{S}(\Delta_o,-u_j) \cap \Delta_o$.
Because $\lambda_{\Delta,o}(L) = 1$ and the interval $[g'_j,x]$ is contained in $\Delta_o$,
\[
\lambda_{\Delta,i}(A_j) \leq \lambda_{[g'_j, x],i}(A_j) .
\]
The interval $[g'_j,x]$ intersects $\mathcal{S}(\Delta_i,u_j)$ at a point $y$. 
Using \eqref{eq:segment}, we can deduce
\[
\lambda_{[g'_j, x],i}(A_j) \leq \frac{|[y, x]|}{|[g'_j, x]|}.
\]
The homothety with center $g'_j$ that moves $x$ to $y$ also moves $\Delta_o$, which has
coefficient of homothety $\lambda_{\Delta,o}(\Delta_o)=1$, to a homothet $H_j$ with coefficient of homothety $\lambda_{\Delta,o}(H_j) = \frac{|[y, x]|}{|[g'_j, x]|}$. 
So,
\[\lambda_{\Delta,i}(A_j) \leq \lambda_\Delta(H_j).\]
Because the interiors of $A_j$ and $L$ do not intersect, 
\[E_{g'_j}(L;\Delta) \geq \lambda_\Delta(H_j).\]

Now, collect the information on all $j$'s. 
Because $u \mapsto g(u)$ is injective,
$g'_1,\ldots,g'_k$ are distinct. 
Summing over $j \in [k]$,
\[
D_e(L) \geq \sum_j \lambda_{\Delta,i}(A_j).
\]

Next, we show that we can use the Kadets-type result by Akopyan and Karasev.

\begin{claim}
\label{clm:A'isInd}
The covering $A'_0\cup A'_1\cup \dots \cup A'_k$ of $\R^n$ 
is inductive.
\end{claim}
\begin{proof}
We prove a more general claim.
For $J \subseteq [k]$, define the $J$-covering to be $V_0$ and $V_j$ for $j \in J$, where
\[V_0 = \bigcap_{j \in J} T_j\]
(if $J = \emptyset$ then $V_0 = \R^n$)
and for $j \in J$,
\[V_j = cl( \R^n \setminus T_j).\]
We prove by induction that the $J$-covering is an inductive covering. 
The $\emptyset$-covering is an inductive covering
(it has one set).
It remains to perform the inductive step.
Let $V_0$ and $V_j$ for $j \in J$ be the $J$-covering for $|J|>0$.
We need to verify that the relevant condition holds for every $j \in \{0\} \cup J$.

First, consider the case $j \neq 0$.
Consider the $(J\setminus \{j\})$-covering: 
\[W_0 = \bigcap_{\ell \in J\setminus \{j\}} T_\ell \]
and $W_\ell = V_\ell$ for $\ell \in J\setminus \{j\}$.
For $\ell = 0$, we use that
$V_0 = cl(W_0 \setminus V_{j})$ and get
\[
W_0 \subseteq cl(W_0 \setminus V_{j}) \cup V_{j} = V_0 \cup V_{j}.
\]
For $\ell \in J \setminus \{j\}$,
\[
W_\ell = V_\ell \subseteq V_\ell \cup V_{j}.
\]

Second, consider the case $j = 0$.
Choose some $i \in J$ and
consider the $(J \setminus \{i\})$-covering:
\[W_{i} = \bigcap_{\ell \in J\setminus \{i\}} T_\ell \]
and $W_\ell = V_\ell$ for 
$\ell \in J\setminus \{i\}$.
For $\ell = i$, we use that
$V_0 = cl(W_{i} \setminus V_{i})$ and get
\[W_{i} \subseteq cl(W_{i} \setminus V_{i}) \cup V_{i} = V_0 \cup V_{i}.
\]
For $\ell \in J \setminus \{i\}$,
\[
W_\ell = V_\ell \subseteq V_\ell \cup V_{0}.
\qedhere 
\]
\end{proof}

Putting it all together, by \Cref{clm:A'isInd} and \Cref{thm:inductive-kadets},
\[ 
\lambda_{\Delta,i}(L) + D_e(L) \geq \sum_{j \in \{0,1,\ldots,k\}} \lambda_{\Delta,i}(A_j) \geq 1.
\qedhere
\]

\end{proof}

\begin{remark*}
Equality is achieved in \Cref{clm:IOe-second} and 
\Cref{lem:IOe-third}
in the following example of $L$. Denote by $v_1,\ldots,v_{n+1}$ the vertices of $\Delta^{n}$.
Let $a$ and $b$ be two points on the edge $[v_1,v_2]$, 
and let $L$ be $\conv\{a,b,v_3,\dots,v_{n+1}\}$.
\end{remark*}

\bibliographystyle{amsplain}
\bibliography{mr.bib}

\end{document}